\newcommand{\mv}[1]{\left\{\!\!\left\{#1\right\}\!\!\right\}}
\newtheorem{theorem}{Theorem}[section]
\newtheorem{lemma}[theorem]{Lemma}
\theoremstyle{definition}
\newtheorem{definition}[theorem]{Definition}
\theoremstyle{remark}
\newtheorem{remark}[theorem]{Remark}
\numberwithin{equation}{section}
\begin{document}

\title{Regular Convergence and Finite Element Methods for Eigenvalue Problems}



	\author{
		Bo Gong\thanks{Department of Mathematical Sciences, Beijing University of Technology, Beijing 100124, People?s Republic of China. {\it gongbo@bjut.edu.cn}}
		\and
		Jiguang Sun\thanks{Department of Mathematical Sciences, Michigan Technological University, Houghton, MI 49931, U.S.A. {\it jiguangs@mtu.edu}}
	}
\date{}

\maketitle

\begin{abstract}
Regular convergence, together with various other types of convergence, has been studied since the 1970s for the discrete approximations of linear operators. In this paper, we consider the eigenvalue approximation of compact operators whose spectral problem can be written as the eigenvalue problem of some holomophic Fredholm operator function. Focusing on the finite element methods (conforming, discontinuous Galerkin, etc.), we show that the regular convergence of discrete holomorphic operator functions follows from the approximation property of the finite element spaces and the compact convergence of the discrete operators in some suitable Sobolev space. The convergence for eigenvalues is then obtained using the discrete approximation theory for the eigenvalue problems of holomorphic Fredholm operator functions. The result can be used to show the convergence of various finite element methods for eigenvalue problems such as the Dirhcilet eigenvalue problem and the biharmonic eigenvalue problem.
\end{abstract}

\section{Introduction}
Eigenvalue problems of partial differential equations have many important applications in science and engineering, e.g., 
design of solar cells for clean energy, calculation of electronic structure in condensed matter, extraordinary optical transmission, non-destructive testing, photonic crystals,
and biological sensing.
Due to the flexibility in treating complex structures and rigorous theoretical justification,
finite element methods have been widely used to compute eigenvalue problems
\cite{BrambleOsborn1973MC, Osborn1975MC,DesclouxNassifRappaz1978RAIROA, BabuskaOsborn1991, Boffi2010AN, SunZhou2016}.

The study of the finite element methods for eigenvalue problems started in 1970s
and has been an active research area since then. Many results obtained before 1990s can be found in the book chapter by Babu\v{s}ka and Osborn \cite{BabuskaOsborn1991} (see also \cite{Boffi2010AN, SunZhou2016} for some recent developments). 
The main functional analysis tool is the spectral perturbation theory for linear compact operators \cite{Kato1966, Chatelin1983}. 
Essentially, if the uniform convergence of the finite element solution operators to the continuous compact operator, the theory of Babu\v{s}ka and Osborn \cite{BabuskaOsborn1991} 
can be employed to obtain the convergence for the eigenvalues and the associated eigenfunctions, i.e., all eigenpairs are approximated and there are no spurious modes. 

While the uniform convergence of the discrete operators can be proved for some finite element methods such as the conforming finite element methods, it is impossible or
challenging for many other finite element methods, e.g., the discontinuous Galerkin methods. Various methods has been proposed in literature to prove the convergence
when the uniform convergence is not available \cite{Mercier1981MC, Antonietti2006CMAME, DesclouxNassifRappaz1978RAIROA}. In this paper, 
we consider the eigenvalue approximation of compact operators whose spectral problem and reformulate them as  the eigenvalue problems 
of some holomophic Fredholm operator functions.
The regular convergence of discrete holomorphic operator functions
follows from the approximation property of the finite element spaces and the compact convergence of the discrete operators in some suitable Sobolev space. 
The convergence for exact eigenvalues and eigenfunctions is then obtained using the discrete 
approximation theory for the eigenvalue problems of holomorphic Fredholm operator functions in \cite{Karma1996a, Karma1996b}. 
This work extends the study in \cite{GrigorieffJeggle1973, Vainikko1976} and provides an alter way to 
analyze the convergence of various finite element approximations for eigenvalue problems. 

The rest of the paper is arranged as follows. In Section~2, we recall the discrete approximation scheme, different types of convergence for linear operators, 
and the abstract approximation theory for the eigenvalue problems of holomorphic Fredholm operator functions. Section~3 contains the study of regular convergence related to the
finite element approximation operators for the partial differential equations in $L^2$ space. It turns out that the approximation property of the finite element space
and the compact convergence of the discrete solution operator guarantee the regular convergence. Using the abstract approximation theory
for holomorphic Fredholm operator functions, we show the convergence of various finite element methods for the Dirichlet eigenvalue problem in Section~4 and
the biharmonic eigenvalue problem in Section~5. We end the paper with some conclusions and future work in Section~6.

\section{Preliminaries}
We introduce the discrete approximation scheme, different types of convergence, and the abstract approximation theory for the eigenvalue problems of
holomorphic Fredholm operator functions. We refer the readers to \cite{Vainikko1976, Stummel1970, Stummel1971, Chatelin1983, Karma1996a, Karma1996b} for more details.

\subsection{Discrete Approximation Scheme}
Let $X$ be a Banach space and $X_n (n \in \mathbb N)$ be a sequence of approximation spaces for $X$. Let $P=\{p_n\}_{n \in \mathbb N}$ be
a sequence of bounded linear operators $p_n: X \to X_n$ such that
\begin{equation} \label{pnxEn}
\|p_nx\|_{X_n} \longrightarrow \|x\|_X \quad (n \in \mathbb N).
\end{equation}

\begin{definition}
A sequence $\{x_n\}_{n \in \mathbb N}$ with $x_n \in X_n$ is said to $P$-converge to $x \in X$ if
\[
\|x_n - p_n x\|_{X_n} \longrightarrow 0 \quad (n \in \mathbb N).
\]
We write it as $x_n \overset{P}{\longrightarrow} x$ or simply $x_n \to x \,(n \in \mathbb N)$.
\end{definition}
If $\{x^{(n)}\} (n \in \mathbb N)$ converges to $x$ in $X$, it holds that $p_n x^{(n)}  \overset{P}{\longrightarrow} x$ (see Chp.~1 of \cite{Vainikko1976}).

\begin{definition}
A sequence $\{x_n\}_{n \in \mathbb N}$ with $x_n \in X_n$ is called (discrete) $P$-compact if for every $\mathbb N' \subset \mathbb N$ there exists $\mathbb N'' \subset \mathbb N'$ and
an $x \in X$ such that $x_n \overset{P}{\longrightarrow} x \,(n \in \mathbb N'')$.
\end{definition}

Let $Y$ be a Banach space. Denote by ${\mathcal L}(X, Y)$ the space of bounded linear operators from $X$ to $Y$.
We denote by $\mathcal{N}(A)=\{x \in X: Ax = 0\}$ and $\mathcal{R}(A)=\{y \in Y: y = Ax, x\in X\}$ the null space and the range
of the operator $A \in \mathcal{L}(X, Y)$, respectively.

\begin{definition}
An operator $A \in \mathcal{L}(X, Y)$ is called semi-Fredholm if $\mathcal{R}(A) \subset Y$ is closed and additionally $\mathcal{N}(A)$ has a finite dimension or 
$\mathcal{R}(A)$ has a finite codimension.
If $\mathcal{R}(A)$ is closed and in addition dim$\mathcal{N}(A)$ and codim$\mathcal{R}(A)$ are both finite, $A$ is called Fredholm.
The index of a Fredholm operator is defined as
\[
\text{ind} A = \text{dim} \mathcal{N}(A) - \text{codim} \mathcal{R}(A).
\]
\end{definition}

In the following, we define various notions of convergence for a linear operator \cite{Stummel1970, Vainikko1976, Chatelin1983}. They are point convergence, stable convergence, compact convergence and regular convergence.
Let $Y_n$ be a sequence of approximation spaces for $Y$ and $Q=\{q_n\}_{n \in \mathbb N}$ be
a sequence of linear operators $q_n: Y \to Y_n$ such that
$\|q_n y\|_{Y_n} \longrightarrow \|y\|_Y\, (n \in \mathbb N)$.
\begin{definition}
A sequence $\{A_n\}$ of linear operators $A_n \in \mathcal{L}(X_n, Y_n)$ converges (or PQ-converges, or converges discretely) to $A \in \mathcal{L}(X, Y)$ 
if the following relation holds for every P-converging sequence $\{x_n\}$:
\[
 x_n  \overset{P}{\longrightarrow}  x \, (n \in \mathbb N) \Longrightarrow A_n x_n   \overset{Q}{\longrightarrow} Ax \, (n \in \mathbb N).
\]
We write it as $A_n \longrightarrow A\, (n \in \mathbb N)$ or $A_n  \overset{PQ }{\longrightarrow} A \,(n \in \mathbb N)$.
\end{definition}

Theorem 2.8 in \cite{Vainikko1976}) claims the equivalence of $A_n  \overset{PQ }{\longrightarrow}  A \, (n \in \mathbb N)$ and
\[
\|A_n\| \le const \,(n \in \mathbb N)\quad \text{and} \quad \|A_np_n x - q_n A x\| \to 0\, (n \in \mathbb N) \,\, \forall x \in X.
\]
We shall present a theorem for later use. It is a consequence of Theorem 2.8 and an exercise (Theorem 2-9) in \cite{Vainikko1976}.
\begin{theorem}\label{UniformBoundedness}
If $p_n \in \mathcal{L}(E, E_n)$, $p_n E = E_n$ and $b=const.$ with
\begin{equation}\label{210}
\inf_{x \in E, p_n x = x_n} \|x\|_E \le b \|x_n\|_{E_n} \quad (n \in \mathbb N, \forall x_n \in E_n)
\end{equation}
exists, then $\|A_n p_n x - q_n Ax\| \to 0 \, (n \in \mathbb N) \, \forall x \in X$ implies $\|A_n\| \le const \,(n \in \mathbb N)$.
\end{theorem}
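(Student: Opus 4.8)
The plan is to recognize the statement as a discrete form of the Banach--Steinhaus (uniform boundedness) principle, in which condition \eqref{210} supplies the uniformly bounded right inverse of $p_n$ needed to transfer boundedness from $A_n p_n$ back to $A_n$. First I would fix $x \in E$ and control $\|A_n p_n x\|_{Y_n}$ along the sequence. Writing $\|A_n p_n x\|_{Y_n} \le \|A_n p_n x - q_n A x\|_{Y_n} + \|q_n A x\|_{Y_n}$, the first term tends to $0$ by hypothesis and is therefore bounded in $n$, while the second is bounded because $\|q_n A x\|_{Y_n} \to \|Ax\|_Y$ by the defining property of $Q$. Hence $\sup_n \|A_n p_n x\|_{Y_n} < \infty$ for every $x \in E$.

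Next I would introduce the operators $T_n := A_n p_n \in \mathcal{L}(E, Y_n)$, each bounded as a composition of bounded maps. The previous step shows that $\{T_n\}$ is pointwise bounded on the Banach space $E$, so the uniform boundedness principle yields a constant $C$ with $\sup_n \|T_n\|_{\mathcal{L}(E, Y_n)} = C < \infty$; equivalently $\|A_n p_n x\|_{Y_n} \le C\|x\|_E$ for all $x \in E$ and all $n$. I would note that the targets $Y_n$ vary with $n$, but this causes no difficulty: the Baire-category proof only refers to the scalar functions $x \mapsto \|T_n x\|_{Y_n}$, which are continuous seminorms, so pointwise boundedness still forces uniform boundedness on a ball.

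Finally I would convert the bound on $T_n$ into the desired bound on $A_n$ using \eqref{210}. Given any $x_n \in E_n$ and any $\varepsilon > 0$, the surjectivity $p_n E = E_n$ together with \eqref{210} lets me select $x \in E$ with $p_n x = x_n$ and $\|x\|_E \le b\|x_n\|_{E_n} + \varepsilon$. Then $A_n x_n = A_n p_n x = T_n x$, so $\|A_n x_n\|_{Y_n} \le C\|x\|_E \le C\bigl(b\|x_n\|_{E_n} + \varepsilon\bigr)$; letting $\varepsilon \to 0$ and taking the supremum over $\|x_n\|_{E_n} \le 1$ gives $\|A_n\| \le Cb$, uniformly in $n$.

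The main obstacle is conceptual rather than computational: pointwise control of $A_n p_n$ does not by itself control $A_n$, since $p_n$ could map large vectors of $E$ onto small vectors of $E_n$. Condition \eqref{210} is exactly what rules this out, guaranteeing a uniformly bounded selection of preimages, and identifying this role for \eqref{210} rather than for the Banach--Steinhaus step is the crux. I would take care that the preimage is chosen per $x_n$ (the infimum in \eqref{210} need not be attained, hence the $\varepsilon$) and that the constant $C$ from the uniform boundedness principle is genuinely independent of $n$.
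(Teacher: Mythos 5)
Your proof is correct, but it takes a different route from the paper. The paper's proof is purely a citation: it invokes Theorem 2-9 of Vainikko's book to upgrade the pointwise convergence $\|A_n p_n x - q_n A x\| \to 0$ to full $PQ$-convergence (this is where the hypotheses $p_nE=E_n$ and \eqref{210} are used), and then invokes Theorem 2.8 there to conclude that $PQ$-convergent sequences of operators are uniformly bounded. You instead give a direct, self-contained argument: pointwise boundedness of $T_n := A_n p_n$ on the Banach space $E$ (from the triangle inequality and $\|q_nAx\|_{Y_n}\to\|Ax\|_Y$), Banach--Steinhaus applied to the continuous seminorms $x\mapsto\|T_nx\|_{Y_n}$ to get $\|A_np_nx\|_{Y_n}\le C\|x\|_E$ uniformly in $n$, and then the near-attainment of the infimum in \eqref{210} to lift this to $\|A_n\|\le Cb$. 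The steps are all sound, including the two points you flag explicitly (Baire category is unaffected by the varying targets $Y_n$, and the infimum need not be attained, hence the $\varepsilon$). What your version buys is transparency: it isolates exactly where \eqref{210} enters, namely as a uniformly bounded choice of preimages under $p_n$, which is invisible in the paper's citation-only proof; what the paper's version buys is brevity and the stronger intermediate conclusion that $A_n \overset{PQ}{\longrightarrow} A$, not merely the norm bound.
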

\begin{proof}
If the conditions of the above theorem are satisfied, by Theorem 2-9 in \cite{Vainikko1976}, one has the equivalence of $\|A_n p_n x - q_n Ax\| \to 0 \, (n \in \mathbb N) \, \forall x \in X$
and $A_n  \overset{PQ }{\longrightarrow}  A \, (n \in \mathbb N)$. By Theorem 2.8 in \cite{Vainikko1976}, $\|A_n\| \le const \,(n \in \mathbb N)$.
\end{proof}

\begin{definition}
A sequence $\{A_n\}_{n \in \mathbb N}$ of linear operators $A_n \in \mathcal{L}(X_n, Y_n)$ converges stably to $A \in \mathcal{L}(X, Y)$ if the following two conditions are met:
\begin{itemize}
\item[1.] $A_n  \overset{PQ}{\longrightarrow} A \,(n \in \mathbb N)$;
\item[2.] there is some $n_0 \in \mathbb N$ such that the inverse operators $A_n^{-1} \in \mathcal{L}(Y_n, X_n)$ exist for all $n \ge n_0$, where 
	\[
		\|A_n^{-1}\| \le C \quad (n \ge n_0).
	\]
\end{itemize}
We write briefly $A_n \to A$ stably.
\end{definition}
\begin{definition}
A sequence $\{A_n\}_{n \in \mathbb N}$ of linear operators $A_n \in \mathcal{L}(X_n, Y_n)$ converges compactly to $A\in \mathcal{L}(X, Y)$
if the following conditions are met:
\begin{itemize}
\item[1.] $A_n \overset{PQ}{\longrightarrow} A \,(n \in \mathbb N)$;
\item[2.] $ \|x_n\|_{X_n} \le C \,(n\in \mathbb N) \Longrightarrow \{A_n x_n\} \,Q\text{-compact}$.
\end{itemize}
\end{definition}
\begin{definition}
A sequence $\{A_n\}_{n \in \mathbb N}$ of linear operators $A_n \in \mathcal{L}(X_n, Y_n)$ converges regularly to $A\in \mathcal{L}(X, Y)$ if the following two conditions are satisfied:
\begin{itemize}
\item[1.] $A_n \overset{PQ}{\longrightarrow} A \,(n \in \mathbb N)$;
\item[2.] $\|x_n\|_{X_n} \le C, \quad \{A_n x_n\} \,Q\text{-compact} \Rightarrow \{x_n\} \, P\text{-compact}$.
\end{itemize}
We write in short $A_n \to A$ regularly.
\end{definition}

The following theorem from \cite{Vainikko1976} (Theorem~2.55 therein) states a connection among the stable convergence, compact convergence, and regular convergence.
\begin{theorem}\label{SCR}
Let
\begin{eqnarray*}
&& B_n \to B \quad \text{stably} \quad (B_n \in {\mathcal L}(X_n, Y_n), B\in {\mathcal L}(X, Y)),\\
&& C_n \to C \quad \text{compactly} \quad  (C_n \in {\mathcal L}(X_n, Y_n), C\in {\mathcal L}(X, Y)),
\end{eqnarray*}
where $R(B)=F$. Then
\[
A_n := B_n + C_n \longrightarrow B+C =:A \quad \text{regularly}.
\]
\end{theorem}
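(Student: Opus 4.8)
The plan is to verify directly the two defining conditions of regular convergence for $A_n = B_n + C_n$. The first, $A_n \overset{PQ}{\longrightarrow} A$, is immediate: discrete ($PQ$-)convergence is linear, so $B_n \overset{PQ}{\longrightarrow} B$ (part of stable convergence) and $C_n \overset{PQ}{\longrightarrow} C$ (part of compact convergence) combine to give $A_n = B_n + C_n \overset{PQ}{\longrightarrow} B + C = A$. Before turning to the second condition, I would record that $B$ is boundedly invertible. Injectivity follows from stable convergence: if $Bx = 0$, then applying $B_n \overset{PQ}{\longrightarrow} B$ to the sequence $p_n x \overset{P}{\longrightarrow} x$ gives $B_n p_n x \overset{Q}{\longrightarrow} 0$, whence $\|p_n x\|_{X_n} = \|B_n^{-1} B_n p_n x\|_{X_n} \le \|B_n^{-1}\|\,\|B_n p_n x\|_{Y_n} \le C\,\|B_n p_n x\|_{Y_n} \to 0$, so $\|x\|_X = \lim_n \|p_n x\|_{X_n} = 0$. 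Together with the range hypothesis $\mathcal R(B) = F = Y$ and the open mapping theorem, this yields $B^{-1} \in \mathcal L(Y, X)$.

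The key lemma is that the inverses converge with the roles of $P$ and $Q$ interchanged (as is natural, since $B_n^{-1}\colon Y_n \to X_n$), namely $B_n^{-1} \overset{QP}{\longrightarrow} B^{-1}$. By the characterization in Theorem~2.8 of \cite{Vainikko1976}, this reduces to the uniform bound $\|B_n^{-1}\| \le C$, which is exactly supplied by stable convergence, together with $\|B_n^{-1} q_n y - p_n B^{-1} y\|_{X_n} \to 0$ for every $y \in Y$. Writing $x = B^{-1} y$ and using $B_n^{-1} B_n p_n x = p_n x$, I would exploit the consistency identity
\[
B_n^{-1} q_n y - p_n x = B_n^{-1}\bigl(q_n B x - B_n p_n x\bigr),
\]
so that $\|B_n^{-1} q_n y - p_n x\|_{X_n} \le \|B_n^{-1}\|\,\|B_n p_n x - q_n B x\|_{Y_n} \le C\,\|B_n p_n x - q_n B x\|_{Y_n}$. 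The right-hand factor tends to $0$ because $B_n \overset{PQ}{\longrightarrow} B$ (again by Theorem~2.8), which proves the lemma.

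With these in hand I would verify the regularity (second) condition. Assume $\|x_n\|_{X_n} \le C$ and that $\{A_n x_n\}$ is $Q$-compact. Since $C_n \to C$ compactly and $\{x_n\}$ is bounded, $\{C_n x_n\}$ is $Q$-compact; as $Q$-compactness is stable under differences, $\{B_n x_n\} = \{A_n x_n - C_n x_n\}$ is $Q$-compact as well. Now fix an arbitrary $\mathbb N' \subset \mathbb N$ and extract $\mathbb N'' \subset \mathbb N'$ and $y \in Y$ with $B_n x_n \overset{Q}{\longrightarrow} y\ (n \in \mathbb N'')$. Applying the key lemma (i.e.\ the definition of $QP$-convergence) to this $Q$-convergent subsequence yields $x_n = B_n^{-1}(B_n x_n) \overset{P}{\longrightarrow} B^{-1} y\ (n \in \mathbb N'')$. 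Since $\mathbb N'$ was arbitrary, $\{x_n\}$ is $P$-compact, which is precisely condition~2 of regular convergence.

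The main obstacle is the inverse-convergence lemma $B_n^{-1} \overset{QP}{\longrightarrow} B^{-1}$; everything downstream is bookkeeping with subsequences and the linearity of $Q$-compactness. Two points there deserve care. First, one must genuinely use surjectivity of $B$: it guarantees that the limit $y$ of $\{B_n x_n\}$ lies in $\mathcal R(B) = Y$, the domain of $B^{-1}$, so that $x = B^{-1} y$ is well defined. Second, the uniform bound $\|B_n^{-1}\| \le C$ from stable convergence is exactly what converts the consistency estimate $\|B_n p_n x - q_n B x\|_{Y_n} \to 0$ into the desired convergence of the inverses.
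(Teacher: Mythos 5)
Your argument is correct and complete: the reduction of the regularity condition to the $Q$-compactness of $\{B_n x_n\}=\{A_n x_n - C_n x_n\}$, followed by an application of the inverse convergence $B_n^{-1}\overset{QP}{\longrightarrow}B^{-1}$ (itself obtained from the uniform bound $\|B_n^{-1}\|\le C$ and the consistency identity $B_n^{-1}q_n y - p_n B^{-1}y = B_n^{-1}\bigl(q_n B x - B_n p_n x\bigr)$), is exactly the standard proof. Note, however, that the paper does not prove this statement at all; it imports it verbatim as Theorem~2.55 of Vainikko's book, so what you have written is a correct self-contained reconstruction of the cited argument rather than an alternative to anything in the paper. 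The only cosmetic caveat is that $B_n^{-1}$ exists only for $n\ge n_0$, which affects none of the limit statements, and that the hypothesis $\mathcal R(B)=F$ should indeed be read as surjectivity of $B$ onto $Y$, as you assumed.
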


We also define the (discrete) uniform convergence for late use.
\begin{definition}
A sequence $\{A_n\}_{n \in \mathbb N}$ of linear operators $A_n \in \mathcal{L}(X_n, Y_n)$ converges uniformly to $A\in \mathcal{L}(X, Y)$ if 
$\| A_n p_n - q_n A\| \to 0$ as $n \to \infty$. We write in short $A_n \to A$ uniformly.
\end{definition}

\subsection{Holomorphic Fredholm Operator Functions}
We now introduce the discrete approximation theory for eigenvalue problems of holomorphic Fredholm operator functions \cite{Karma1996a, Karma1996b}.
Let $X$ and $Y$ be complex Banach spaces. 
Let  $\Omega \subseteq \mathbb C$ be a compact simply connected region. 
Let $F: \Omega \to \mathcal{L}(X, Y)$ be a holomorphic operator function on $\Omega$ and, for each $\eta \in \Omega$, 
$F(\eta)$ is a Fredholm operator of index zero \cite{Gohberg2009}. 
\begin{definition}A complex number $\lambda \in \Omega$ is called an eigenvalue
of $F$ if there exists a nontrivial $x \in X$ such that $F(\lambda) x = 0$. The element $x$ is called an eigenelement associated with $\lambda$.
\end{definition}

The resolvent set $\rho(F)$ and the spectrum $\sigma(F) $ of $F(\cdot)$ are defined respectively as
\begin{equation}\label{rhoF}
\rho(F) = \{\eta \in \Omega: F(\eta)^{-1} \text{ exists and is bounded}\}
\end{equation}
and
\begin{equation}\label{sigmaF}
\sigma(F) = \Omega \setminus \rho(F).
\end{equation}
If $\rho(F) \ne \emptyset$, since $F(\eta)$ is holomorphic, the spectrum $\sigma(F)$ has no cluster points in $\Omega$ and every $\lambda \in \sigma(F)$ is
an eigenvalue for $F(\eta)$. Furthermore,  the operator function
$F^{-1}(\cdot)$ is meromorphic (see Section 2.3 of \cite{Karma1996a}).
The dimension of $\mathcal{N}(F(\lambda))$ is called the 
geometric multiplicity of an eigenvalue $\lambda$. 

\begin{definition}
An ordered sequence of elements $x_0, x_1, \ldots, x_k$ in $X$ is called a Jordan
chain of $F$ at an eigenvalue $\lambda$ if
\[
F(\lambda)x_j + \frac{1}{1!} F^{(1)}(\lambda)x_{j-1}+ \ldots + \frac{1}{j!} F^{(j)}(\lambda) x_0 = 0, \quad j = 0, 1, \ldots, k,
\]
where $F^{(j)}$ denotes the $j$th derivative. 
\end{definition}

The length of any Jordan chain of an eigenvalue $\lambda$ is finite.
Denote by $m(F,\lambda, x_0)$ the length of a Jordan chain formed by an eigenelement $x_0$.
The maximal length of all Jordan chains of $\lambda$ is denoted by $\kappa(F, \lambda)$, called the ascent of $\lambda$.
Elements of any Jordan chain of an eigenvalue $\lambda$ are called generalized eigenelements of $\lambda$.
\begin{definition}
The closed linear hull of all generalized eigenelements of an eigenvalue $\lambda$, denoted by $G(\lambda)$,
is called the generalized eigenspace of $\lambda$.
\end{definition}



Let $X_n, Y_n$ be Banach spaces, not necessarily subspaces of $X, Y$. Let $p_n \in \mathcal{L}(X, X_n)$ and
		$q_n \in \mathcal{L}(Y, Y_n)$  be such that
		\begin{equation}\label{pnqn}
			\lim_{n \to \infty} \|p_n x\|_{X_n}=\|x\|_X, \, x \in X \quad \text{and} \quad
			\lim_{n \to \infty} \|q_n y\|_{Y_n}=\|y\|_Y, \, y \in Y.
		\end{equation}
Denote by $\Phi_0(X, Y)$ the sets in $\mathcal{L}(X, Y)$ of all Fredholm operators of index zero.
Consider a sequence of discrete operator functions 
\[
F_n: \Omega \to  \Phi_0(X_n, Y_n), \quad n \in \mathbb N.
\]
Assume that  $\rho(F) \ne \emptyset$ and 
\begin{itemize}
\item[(b1)] $F_n(\eta), \eta \in \Omega,$ is a holomorphic Fredholm operator of index zero;
\item[(b2)] The sequence $\{F_n(\cdot)\}$ is equibounded on $\Omega$, i.e.,
	$\|F_n(\eta)\| \le C, \eta \in \Omega$;
\item[(b3)] $\{F_n(\eta)\}_{n \in N}$ converges to $F(\eta)$ for every $\eta \in \Omega$ and $x \in X$, i.e., 
	\[
	\|[F_n(\eta)p_n-p_n F(\eta)] x\|_{Y_n} \to 0;
	\]	 
\item[(b4)] $F_n(\eta)$ converges regularly to $F(\eta)$ for every $\eta \in \Omega$.
\end{itemize}

The following theorem states that
all eigenvalues are approximated correctly (see \cite{Karma1996a, Karma1996b} or \cite{Beyn2014}). 
\begin{theorem}\label{Thm210} 
Assume that (b1)-(b4) hold. 
 For any $\lambda \in \sigma(F)$ there exists $n_0 \in \mathbb N$ and a sequence $\lambda_n \in \sigma(F_n)$,
	$n \ge n_0$, such that $\lambda_n \to \lambda$ as $n \to \infty$. For any sequence $\lambda_n \in \sigma(F_n)$
	with this convergence property and the associate eigenelements $v_n^0 \in \mathcal{N}(F(\lambda_n)), \|v_n^0\|_{X_n}=1$, 
	one has that
	\begin{eqnarray}
	\label{eigenvalueorder}	|\lambda_n - \lambda| &\le& C \epsilon_n^{1/\kappa}, \\
	\label{eigenvectororder}	\inf_{v \in \mathcal{N}(F(\lambda))} \| v_n^0 - p_n v\|_{X_n} &\le& C \epsilon_n^{1/\kappa},
	\end{eqnarray}
	where
	\[
		\epsilon_n = \max_{|\eta-\lambda| \le \delta}\max_{v \in G(\lambda)} \| F_n(\eta)p_n v - q_n F(\eta) v\|_{X_n}
	\]
	for sufficiently small $\delta > 0$.
\end{theorem}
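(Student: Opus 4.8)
The plan is to localize the analysis near the fixed eigenvalue $\lambda$ and reduce all three assertions to an invertibility statement and a quantitative lower bound for $F_n(\eta)$ on a small punctured disk. Since $\rho(F)\neq\emptyset$, the spectrum $\sigma(F)$ has no cluster points in $\Omega$, so I would fix $\delta>0$ with $\{\eta:|\eta-\lambda|\le\delta\}\cap\sigma(F)=\{\lambda\}$ and work on the circle $\Gamma=\{\eta:|\eta-\lambda|=\delta\}$, where $F(\eta)^{-1}$ exists and is bounded. The first step is to show that $F_n(\eta)$ is invertible with $\|F_n(\eta)^{-1}\|\le C$ uniformly for $\eta\in\Gamma$ and $n\ge n_0$; this is the passage from regular to stable convergence (a standard fact, cf.\ \cite{Vainikko1976}). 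I would argue by contradiction: if it failed, there would be $\eta_n\in\Gamma$ and $x_n\in X_n$, $\|x_n\|_{X_n}=1$, with $\|F_n(\eta_n)x_n\|_{Y_n}\to0$; passing to a subsequence with $\eta_n\to\eta_*\in\Gamma$ and using the equicontinuity of $\{F_n(\cdot)\}$ from the Cauchy estimates afforded by (b2) to replace $\eta_n$ by $\eta_*$, the image sequence is $Q$-null, hence $Q$-compact, so regular convergence (b4) at $\eta_*$ makes $\{x_n\}$ $P$-compact. A further subsequence $P$-converges to some $x$ with $F(\eta_*)x=0$, and invertibility of $F(\eta_*)$ forces $x=0$, contradicting $\|x\|_X=1$ via \eqref{pnqn}.

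With uniform invertibility on $\Gamma$, existence of the $\lambda_n$ follows from the generalized argument principle of Gohberg--Sigal for holomorphic Fredholm functions \cite{Gohberg2009}. I would compare the Gohberg--Sigal multiplicity $M(F_n,\Gamma)$ (the logarithmic residue enclosed by $\Gamma$) with $M(F,\Gamma)$: using (b3), the uniform bound on $F_n(\eta)^{-1}$, and the convergence of the contour integrands on $\Gamma$, the integer $M(F_n,\Gamma)$ converges to $M(F,\Gamma)$, which is the positive algebraic multiplicity of $\lambda$. An integer-valued convergent sequence is eventually constant, so for $n\ge n_0$ at least one $\lambda_n\in\sigma(F_n)$ lies inside $\Gamma$; letting $\delta\downarrow0$ yields a sequence with $\lambda_n\to\lambda$.

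The technical heart, and the step I expect to be the main obstacle, is the quantitative bound giving the exponent $1/\kappa$ in \eqref{eigenvalueorder}. Because $\lambda$ has ascent $\kappa$, the local Gohberg--Sigal factorization $F(\eta)=E(\eta)D(\eta)G(\eta)$, with $E,G$ holomorphic and boundedly invertible near $\lambda$ and $D(\eta)$ carrying the partial multiplicities (the largest equal to $\kappa$), yields $\|F(\eta)^{-1}\|\le C|\eta-\lambda|^{-\kappa}$ for $0<|\eta-\lambda|\le\delta$, equivalently $\|F(\eta)v\|_Y\ge c|\eta-\lambda|^{\kappa}\|v\|_X$ for all $v\in X$. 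The obstacle is to transfer this coercivity to $F_n$ with a deficit controlled by $\epsilon_n$: the degeneracy as $\eta\to\lambda$ occurs only along the finite-dimensional generalized eigenspace $G(\lambda)$, which is exactly why $\epsilon_n$ is measured on $G(\lambda)$. I would split $x_n\in X_n$ into a component near $p_nG(\lambda)$ and a complementary component; on the complement, regular convergence provides $\eta$-uniform coercivity of $F_n(\eta)$, while on the $p_nG(\lambda)$-component the estimate $\|F_n(\eta)p_nv\|\ge\|q_nF(\eta)v\|-\|(F_n(\eta)p_n-q_nF(\eta))v\|\ge (c|\eta-\lambda|^{\kappa}-o(1))\|v\|-C\epsilon_n\|v\|$ combines the continuous coercivity with the consistency bound. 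Assembling the two pieces gives $\|F_n(\eta)x_n\|_{Y_n}\ge c_1|\eta-\lambda|^{\kappa}\|x_n\|_{X_n}-c_2\epsilon_n\|x_n\|_{X_n}$ for $n\ge n_0$; evaluating at $\eta=\lambda_n$, $x_n=v_n^0$ and using $F_n(\lambda_n)v_n^0=0$ forces $c_1|\lambda_n-\lambda|^{\kappa}\le c_2\epsilon_n$, which is \eqref{eigenvalueorder}. Making the decomposition and the matching of the two coercivity constants rigorous, uniformly in $\eta$ on the punctured disk, is where the real work lies.

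Finally, \eqref{eigenvectororder} follows by feeding the eigenvalue rate back in. From $F_n(\lambda_n)v_n^0=0$ and Cauchy estimates from (b2) bounding $\|F_n'(\eta)\|\le L$ on a slightly smaller disk, one gets $\|F_n(\lambda)v_n^0\|_{Y_n}=\|(F_n(\lambda)-F_n(\lambda_n))v_n^0\|_{Y_n}\le L|\lambda-\lambda_n|\le C\epsilon_n^{1/\kappa}$, so $v_n^0$ is an approximate null element of $F_n(\lambda)$. The coercivity of $F_n(\lambda)$ on the complement of $p_n\mathcal N(F(\lambda))$, together with the approximation of $\mathcal N(F(\lambda))$ by $p_n\mathcal N(F(\lambda))$, then yields $\inf_{v\in\mathcal N(F(\lambda))}\|v_n^0-p_nv\|_{X_n}\le C\epsilon_n^{1/\kappa}$, establishing \eqref{eigenvectororder}. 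Throughout, the abstract approximation theory of \cite{Karma1996a, Karma1996b} (see also \cite{Beyn2014}) underlies the passage from the contour estimates to the stated orders.
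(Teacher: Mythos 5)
You should first note that the paper does not prove Theorem~\ref{Thm210} at all: it is quoted verbatim from the abstract approximation theory of \cite{Karma1996a,Karma1996b} (see also \cite{Beyn2014}), so there is no in-paper proof to match. Judged on its own, your outline follows the standard strategy of that literature, and its first stage is sound: isolating $\lambda$ by a circle $\Gamma$, and upgrading regular convergence to uniform invertibility of $F_n(\eta)$ on $\Gamma$ by the contradiction argument ($Q$-null images, hence $P$-compactness of $\{x_n\}$, hence a nontrivial null element of the invertible $F(\eta_*)$) is exactly right; the limit $F(\eta_*)x=0$ is justified because (b2)--(b3) give $PQ$-convergence via Theorem~2.8 of \cite{Vainikko1976}, and $\|x\|_X=1$ follows from \eqref{pnqn}.

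There are, however, two genuine gaps. First, the existence step: you assert that the Gohberg--Sigal multiplicities satisfy $M(F_n,\Gamma)\to M(F,\Gamma)$ ``using the convergence of the contour integrands.'' But $F_n$ and $F$ act on different spaces $X_n$ and $X$, and the logarithmic residue $\mathrm{tr}\,\frac{1}{2\pi i}\oint_\Gamma F_n(\eta)^{-1}F_n'(\eta)\,d\eta$ does not converge to its continuous counterpart by any pointwise argument available from (b1)--(b4); the preservation of the enclosed multiplicity is itself one of the main theorems of \cite{Karma1996a} and requires the finite-dimensional reduction, not just integrand convergence. Second, and more seriously, the rate \eqref{eigenvalueorder}: the inequality $\|F_n(\eta)(p_ng_n+w_n)\|\ge c_1|\eta-\lambda|^\kappa\|x_n\|-c_2\epsilon_n\|x_n\|$ does not follow from separate lower bounds on the two summands. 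You need a transversality (uniform angle) estimate between $F_n(\eta)p_nG(\lambda)$ and $F_n(\eta)W_n$, uniformly in $\eta$ on the punctured disk and in $n$, and this is not automatic: $F(\lambda)$ sends the higher links of a Jordan chain to nonzero elements of $Y$ that have no reason to avoid $F(\lambda)W$, so the images of the two components can nearly cancel. This is precisely the obstruction that \cite{Karma1996b} circumvents by a bordering/Schur-complement reduction to a holomorphic matrix function whose determinant vanishes at $\lambda$ to the algebraic multiplicity, the exponent $1/\kappa$ then coming from a perturbation estimate for zeros of scalar holomorphic functions governed by the maximal partial multiplicity. (A minor point: $\epsilon_n$ must be read as a maximum over the unit ball of $G(\lambda)$, which is legitimate since $\dim G(\lambda)<\infty$.) Your final paragraph on \eqref{eigenvectororder} is essentially correct once one has \eqref{eigenvalueorder} and a uniform lower bound for $F_n(\lambda)$ on a complement of $p_n\mathcal{N}(F(\lambda))$, the latter again supplied by regular convergence; but as written the argument for the two quantitative estimates rests on the unproved splitting, so the proposal cannot stand alone without importing the machinery of \cite{Karma1996b}.
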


\section{Finite Element Approximations}\label{FemConv}
To analyze finite element methods for eigenvalue problems of partial differential equations, we choose $X=Y =L^2(D)$ for the rest of the paper, 
where $D \subset \mathbb R^2$ is a Lipschitz polygonal domain. The result in this paper holds for higher dimension cases if the corresponding 
finite element methods approximation
properties for the source problem are available.
Denote by $\|\cdot\|$ the usual $L^2$-norm.
Let $T$ be the solution operator for the source problem associated to the eigenvalue problem. For example, the solution operator for the Poisson equation with homogeneous Dirichlet
boundary condition is associated with the Dirichlet eigenvalue problem.
In this section, we present some general results that can be used to prove the convergence of a large class of finite element methods for eigenvalue problems.
It turns out that one only needs to verify the approximation property of the finite element spaces and
the pointwise convergence of the solution operator for the source problem in $L^2$ norm.

Assume that $T \in \mathcal{L}(X, X)$ is compact and let $I: X \to X$ be the identity operator. 
The eigenvalue problem for $T$ is to find $\lambda \ne 0$ and nontrivial $x \in X$ such that
\begin{equation}\label{Teig}
Tx = \frac{1}{\lambda} x.
\end{equation}

Define $F(\eta): \Omega \to \mathcal{L}(X, X)$ such that
\begin{equation}\label{Feta}
F(\eta) := T - \frac{1}{\eta} I, \quad \eta \in \Omega,
\end{equation} 
where $\Omega$ is a compact subset of $\mathbb C$ such that $0 \notin \Omega$. It is clear that $F(\cdot)$ is a holomorphic Fredholm operator function on $\Omega$.
The eigenvalue problem of $F(\cdot)$ is to find $\lambda \in \Omega$ and $x \in X$ such that
\begin{equation}\label{eigen}
F(\lambda) x = 0.
\end{equation} 
Clearly the above eigenvalue problem is equivalent to the eigenvalue problem \eqref{Teig} for $T$.

Let $\mathcal{T}_n := \mathcal{T}_{h_n}$ be a regular triangular mesh for $D$ with mesh size $h_n \to 0^+, n \to \infty$.
Let $X_n \subset X$ be the associated finite element space endowed with the $L^2$-norm and $P=\{p_n\}$, $p_n: X \to X_n$, is the $L^2$-projection, i.e., for $f \in X$, $p_n f \in X_n$
is such that
\begin{equation}\label{pn}
(p_n f, x_n) = (f, x_n) \quad \text{for all } x_n \in X_n.
\end{equation}

Let $I_n: X_n \to X_n$ be the identity operator. 
Assume that there exists a series of finite element approximation operators
$T_n: X_n \to X_n$ for $T$. Define
the discrete approximation operators
\begin{equation}\label{FnEta}
F_n(\eta) := T_n - \frac{1}{\eta} I_n, \quad \eta \in \Omega.
\end{equation}
If $I_n \to I$ stably and  $T_n \to T$ compactly, then using Theorem~\ref{SCR}, it holds that
\begin{equation}\label{FnR}
F_n(\eta) \to F(\eta) \quad \text{regularly for } \eta \in \Omega.
\end{equation}

For the convergence analysis of the discrete eigenvalues of $F_n(\cdot)$ to those of $F(\cdot)$, we shall see that the following two conditions are sufficient.
\begin{itemize}
\item[(i)] for $x \in X$, $\| p_n x - x \| \to 0$ as $n \to \infty$.
\item[(ii)] $T_n \to T$ compactly.
\end{itemize}
Condition (i) is the approximation property of the finite element spaces $X_n$ and (ii) is the compact convergence of the finite element solution operators to the continuous solution operator  for the source problem.

For the finite element spaces $X_n$ consider in this paper, (i) holds. Then the stable convergence of $I_n$ to $I$ is implied by the discrete convergence of $I_n$ to $I$, i.e., $I_n \to I$. 
\begin{lemma} 
Let Condition (i) hold. Then $I_n \to I$ stably.
\end{lemma}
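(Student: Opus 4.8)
The plan is to show that $I_n \to I$ stably by verifying the two requirements in the definition of stable convergence: first, that $I_n \overset{PQ}{\longrightarrow} I$ (discrete convergence), and second, that the inverse operators $I_n^{-1}$ exist and are uniformly bounded for all sufficiently large $n$. Here we are in the setting $X = Y = L^2(D)$, $X_n = Y_n$, and $P = Q = \{p_n\}$ with $p_n$ the $L^2$-projection onto $X_n$.

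First I would establish the discrete convergence $I_n \overset{PQ}{\longrightarrow} I$. By the characterization recalled after the definition of $PQ$-convergence (Theorem 2.8 of \cite{Vainikko1976}), it suffices to check that $\|I_n\| \le \mathrm{const}$ and that $\|I_n p_n x - p_n I x\|_{X_n} \to 0$ for every $x \in X$. The first is immediate since $\|I_n\| = 1$. The second is trivial as well: because $I_n$ is the identity on $X_n$ and $p_n x \in X_n$, we have $I_n p_n x = p_n x = p_n I x$, so the difference is exactly zero for every $n$. Thus the convergence condition holds vacuously and $I_n \overset{PQ}{\longrightarrow} I$.

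The second requirement is the substantive one. The inverse $I_n^{-1}$ is again the identity $I_n$, which trivially exists, and $\|I_n^{-1}\|_{\mathcal{L}(X_n,X_n)} = 1 \le C$ uniformly in $n$. So the uniform boundedness of the inverses is automatic. At this point one might worry that \emph{nothing} has used Condition (i), which would make the hypothesis of the lemma superfluous. The role of Condition (i) is more delicate: it enters through the compatibility requirement \eqref{pnxEn} on the connecting operators, namely that $\|p_n x\|_{X_n} \to \|x\|_X$. Since $p_n$ is the $L^2$-projection onto $X_n$, Condition (i) ($\|p_n x - x\| \to 0$ for all $x \in X$) is precisely what guarantees $\|p_n x\| \to \|x\|$, so that $P$ is a legitimate connecting system and the discrete approximation scheme (hence the very notion of stable convergence) is well-defined. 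I would therefore begin the proof by invoking Condition (i) to confirm \eqref{pnxEn}, and only then verify the two defining conditions.

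I do not expect any serious obstacle here; the lemma is essentially a bookkeeping statement whose content is that the identity operators, under the $L^2$-projection framework, behave as nicely as possible. The only point requiring care is to articulate clearly where Condition (i) is genuinely needed (in validating the connecting operators, not in the two bullet points of stability), so that the statement of the lemma is not left looking gratuitous. If one wished, the uniform boundedness $\|I_n\| \le \mathrm{const}$ could alternatively be obtained from Theorem~\ref{UniformBoundedness} by checking the condition \eqref{210} for the $L^2$-projection, but the direct computation $\|I_n\| = 1$ makes that route unnecessary.
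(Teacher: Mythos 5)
Your proof is correct and takes essentially the same route as the paper's one-line argument: verify the discrete convergence $I_n \overset{PQ}{\longrightarrow} I$ and note that $I_n^{-1}=I_n$ exists with norm $1$, uniformly in $n$. Your added observation that Condition (i) really enters only through validating the connecting system (ensuring $\|p_n x\|\to\|x\|$, so that the notion of stable convergence is even well-posed, while the two bullet points of stability are trivial for the identity) is a more explicit account of the same fact the paper compresses into ``Since (i) holds, $I_n \to I$.''
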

\begin{proof}
Since (i) holds, $I_n \to I$. Furthermore, $I_n^{-1}: X_n \to X_n$ exists and is bounded, $I_n \to I$ stably.
\end{proof}
 
 The following lemmas are on the sufficient conditions for the compact convergence.
 \begin{lemma}\label{CompactConv}
 Let Condition (i) hold and assume that $T_n \to T$ discretely. If $X_n \subset X'$ such that $X'$ is compactly embedded in $X$, then $T_n \to T$ compactly. 
 \end{lemma}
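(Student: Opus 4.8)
The plan is to verify directly the two defining properties of compact convergence from the relevant definition. Property~1, namely $T_n \overset{PQ}{\longrightarrow} T$, is precisely the standing hypothesis that $T_n \to T$ discretely, so nothing remains to be checked there. All the work therefore goes into property~2: given any sequence $\{x_n\}$ with $x_n \in X_n$ and $\|x_n\|_{X_n} \le C$, I must show that $\{T_n x_n\}$ is $Q$-compact, i.e.\ that every infinite index set $\mathbb N' \subset \mathbb N$ admits a further infinite subset $\mathbb N'' \subset \mathbb N'$ and a limit $y \in X$ with $T_n x_n \overset{P}{\longrightarrow} y$ along $\mathbb N''$.

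First I would fix such a bounded sequence and set $y_n := T_n x_n$. Since $X_n \subset X'$, each $y_n$ is a genuine element of $X' \subset X = L^2(D)$, so ordinary strong convergence in $X$ is meaningful and can be compared with $P$-convergence. The decisive step is to secure a uniform bound $\|y_n\|_{X'} \le C$. Granting this, the compact embedding of $X'$ into $X$ allows me to extract, from an arbitrary $\mathbb N'$, a subsequence $\mathbb N''$ along which $y_n \to y$ strongly in $L^2(D)$ for some $y \in X$.

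It then remains to upgrade this $L^2$-convergence to $P$-convergence. Because $y_n$ and $p_n y$ both lie in $X_n$ with the $L^2$-norm, I would split
\[
\|y_n - p_n y\|_{X_n} \le \|y_n - y\| + \|y - p_n y\|,
\]
where the first term tends to zero along $\mathbb N''$ by the choice of subsequence and the second by Condition~(i), the approximation property of the finite element spaces. Hence $y_n \overset{P}{\longrightarrow} y$ along $\mathbb N''$, which is exactly the $Q$-compactness required; combined with property~1 this gives $T_n \to T$ compactly.

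The main obstacle will be the uniform $X'$-bound on $\{y_n\}$. Discrete convergence alone only yields $\|T_n\| \le \mathrm{const}$ in the $L^2 \to L^2$ operator norm (via the characterization of $PQ$-convergence in \cite{Vainikko1976}), which controls $\{y_n\}$ in $L^2$ but not in the stronger norm of $X'$; yet the compact-embedding step genuinely needs boundedness in $X'$. One must therefore invoke the finite element a priori (energy/regularity) estimate for the source problem, which bounds the discrete solution operator from $L^2$ into $X'$ uniformly in $n$, to control $\|y_n\|_{X'}$. The remaining pieces, in particular the passage from strong $L^2$-convergence to $P$-convergence, are routine.
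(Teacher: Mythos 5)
Your proposal is correct and follows essentially the same route as the paper: bound $\{T_nx_n\}$ in $X'$, extract an $L^2$-convergent subsequence via the compact embedding, and conclude discrete compactness. In fact you are more careful than the paper's own one-line argument, which simply asserts that $\{T_nx_n\}$ is bounded in $X'$ and omits the upgrade from strong $L^2$-convergence to $P$-convergence; your observation that the uniform $X'$-bound does not follow from discrete convergence alone but requires the a priori estimate that the $T_n$ are uniformly bounded from $L^2$ into $X'$ is exactly the implicit hypothesis the paper invokes later when applying the lemma (``Since $T_n: X_n \to X'$, $n \in \mathbb N$, are uniformly bounded\dots'').
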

 \begin{proof}
 Let  $\|x_n\| \le C \,(n\in \mathbb N)$. Then $\{T_n x_n\} \,(n\in \mathbb N)$ is bounded in $X'$. Due to the compact embedding of $X'$ into $X$, there exist a convergent subsequence of $\{T_n x_n\} \,(n\in \mathbb N)$, denoted by $\{T_n x_n\} \,(n\in \mathbb N' \subset \mathbb N)$, such that $T_n x_n \to y \in X, n \in \mathbb N'$. Hence $T_n \to T$ compactly. The proof is complete.
 \end{proof}
 
\begin{remark}\label{Tnprime}
For a finite element approximation of the source problem, one usually has a discrete solution operator $T_n': X \to X_n$.
In general, the discrete operator $T_n: X_n \to X_n$ is such that $T_n p_n =T'_n$. 
\end{remark}

\begin{lemma}\label{U2C}
Let Condition (i) hold and assume that $X_n \subset X, n \in \mathbb N$. Let $T: X \to X$ be a compact operator. If $T_n \to T$ uniformly,
then $T_n \to T$ compactly.
\end{lemma}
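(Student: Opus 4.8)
The plan is to verify directly the two defining properties of compact convergence: $T_n \overset{PQ}{\longrightarrow} T$, and the $Q$-compactness of $\{T_n x_n\}$ for every bounded $\{x_n\}$. Throughout I would exploit the two structural facts supplied by the hypotheses: $X_n \subset X$ carries the $L^2$-norm, and $p_n$ is the $L^2$-projection, so that $\|p_n\| \le 1$ and $p_n x_n = x_n$ for $x_n \in X_n$. For the discrete convergence I would invoke the equivalence recalled before Theorem~\ref{UniformBoundedness} (Theorem~2.8 of \cite{Vainikko1976}): it suffices to check $\|T_n p_n x - p_n T x\| \to 0$ for all $x \in X$ together with a uniform bound $\|T_n\| \le \text{const}$. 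The first is immediate from uniform convergence, since $\|T_n p_n x - p_n T x\| \le \|T_n p_n - p_n T\|\,\|x\| \to 0$, and the bound follows from $\|T_n\| = \sup_{\|x_n\|=1}\|T_n p_n x_n\| \le \|T_n p_n\| \le \|T_n p_n - p_n T\| + \|T\|$, which is finite for large $n$.

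The heart of the argument is the second property, via the decomposition
\[
T_n x_n = (T_n p_n - p_n T)\,x_n + p_n T x_n, \qquad \|x_n\| \le C,
\]
valid because $p_n x_n = x_n$. Given an arbitrary $\mathbb N' \subset \mathbb N$, the set $\{x_n\}_{n \in \mathbb N'}$ is bounded in $X$, so compactness of $T$ yields a further subsequence $\mathbb N'' \subset \mathbb N'$ and some $y \in X$ with $T x_n \to y$ in $X$. On $\mathbb N''$ the first term is controlled by uniform convergence, $\|(T_n p_n - p_n T)x_n\| \le \|T_n p_n - p_n T\|\,C \to 0$, and the second by $\|p_n T x_n - p_n y\| \le \|p_n\|\,\|T x_n - y\| \le \|T x_n - y\| \to 0$; adding these gives $\|T_n x_n - p_n y\| \to 0$, i.e. $T_n x_n \overset{P}{\longrightarrow} y$ on $\mathbb N''$. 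This is exactly the $Q$-compactness of $\{T_n x_n\}$, so both conditions hold and $T_n \to T$ compactly.

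I do not expect a genuine obstacle; the delicate points are only bookkeeping. Compactness of $T$ must be applied after passing to the prescribed set $\mathbb N'$, so that the common $P$-limit $y$ is produced on the nested subsequence $\mathbb N''$ demanded by the definition of $P$-compactness. The passage from $\|T_n p_n\| \le \text{const}$ to $\|T_n\| \le \text{const}$ uses the inclusion $X_n \subset X$ and the projection identity $p_n x_n = x_n$ in an essential way; without the inclusion one would instead draw the uniform bound from Theorem~\ref{UniformBoundedness}. Condition~(i) enters only to guarantee that $P = \{p_n\}$ is an admissible connecting system, i.e. that \eqref{pnxEn} holds.
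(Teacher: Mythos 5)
Your proof is correct and follows essentially the same route as the paper's: the identical decomposition $T_n x_n - p_n y = (T_n p_n - p_n T)x_n + p_n(Tx_n - y)$, with compactness of $T$ supplying the convergent subsequence of $\{Tx_n\}$ and uniform convergence killing the first term. You are in fact slightly more complete than the paper, which omits the explicit verification of the $PQ$-convergence condition and the careful start from an arbitrary $\mathbb N'$ in the definition of discrete compactness.
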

\begin{proof}
Let  $\|x_n\| \le C \,(n\in \mathbb N)$. Since $T$ is compact, $\{Tx_n\}$ has a convergent subsequence $\{Tx_n\}_{n \in N'}, N'\subset \mathbb N$ such that
$Tx_n \to y \in X $ as $N' \ni n \to \infty$.
For $n \in N'$, 
\begin{eqnarray*}
\|T_n x_n - p_n y \| &=& \|T_n x_n - p_nTx_n + p_nTx_n -p_n y\| \\
	 &\le& \|T_n p_n x_n - p_nT x_n\| + \|p_nT x_n - p_n y\| \\
	 &\to& 0 \quad \text{as } n \to \infty.
\end{eqnarray*}
Hence $\{T_n x_n\}_{n \in \mathbb N}$ is discretely compact and $T_n \to T$ compactly.
\end{proof}

\begin{theorem}\label{FnFregular}
Let $T: X \to X$ be compact. Assume that $T_n \to T$ compactly and $I_n \to I$ stably. 
Then 
\begin{itemize}
\item[1.] $\|F_n(\eta)\| \le C$ for $\eta \in \Omega$;
\item[2.] For every $\eta \in \Omega$ and $x \in X$, $\|[F_n(\eta)p_n-p_n F(\eta)] x\| \to 0$;
\item[3.] $F_n(\eta) \to F(\eta)$ regularly for each $\eta \in \Omega$.
\end{itemize}
\end{theorem}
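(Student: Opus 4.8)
The plan is to prove the three claims in order, the key preliminary observation being that the two identity terms in $F_n(\eta)p_n - p_n F(\eta)$ cancel exactly, which reduces Claims 1 and 2 to known properties of $T_n$ alone. Concretely, I would first record the algebraic identity
\[
F_n(\eta)p_n - p_n F(\eta) = (T_n p_n - p_n T) - \frac{1}{\eta}(I_n p_n - p_n I).
\]
Since $I_n$ is the identity on $X_n$ and $p_n x \in X_n$, we have $I_n p_n = p_n$; and since $I$ is the identity on $X$, we have $p_n I = p_n$. Hence $I_n p_n - p_n I = 0$ and the expression collapses to $T_n p_n - p_n T$.

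For Claim 1, I would estimate $\|F_n(\eta)\| \le \|T_n\| + \frac{1}{|\eta|}\|I_n\|$. Here $\|I_n\| = 1$ (identity on $X_n$), and because $\Omega$ is compact with $0 \notin \Omega$, the factor $1/|\eta|$ is bounded above by $M := \max_{\eta \in \Omega} 1/|\eta| < \infty$, uniformly in $\eta$. It remains to bound $\|T_n\|$: compact convergence of $T_n$ to $T$ includes the $PQ$-convergence $T_n \overset{PQ}{\longrightarrow} T$, and by Theorem~2.8 of \cite{Vainikko1976} this forces $\|T_n\| \le \mathrm{const}$. Combining these gives $\|F_n(\eta)\| \le C$ uniformly for $\eta \in \Omega$.

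For Claim 2, using the cancellation above we have $\|[F_n(\eta)p_n - p_n F(\eta)]x\| = \|(T_n p_n - p_n T)x\|$, and this tends to $0$ for every fixed $x \in X$ precisely because $T_n \overset{PQ}{\longrightarrow} T$ (the $PQ$-convergence component of compact convergence, specialized to the present setting $X = Y$, $q_n = p_n$). For Claim 3, I would invoke Theorem~\ref{SCR} directly by writing $F_n(\eta) = B_n + C_n$ with $B_n := -\frac{1}{\eta}I_n$ and $C_n := T_n$. The compact convergence $C_n \to C := T$ is a hypothesis. For the stable part, $I_n \to I$ stably is given, and multiplication by the nonzero scalar $-1/\eta$ preserves stable convergence: $PQ$-convergence is stable under scalar multiples, and $(-\frac{1}{\eta}I_n)^{-1} = -\eta I_n$ is bounded uniformly in $n$ since $\|-\eta I_n\| = |\eta|$ is bounded on the compact set $\Omega$. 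The surjectivity hypothesis on $B$ required by Theorem~\ref{SCR} (that the range of $B$ fills the target space) holds because $B = -\frac{1}{\eta}I$ maps $X$ onto $X = Y$. Theorem~\ref{SCR} then delivers $F_n(\eta) \to F(\eta)$ regularly for each fixed $\eta \in \Omega$.

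The steps are all short, and I do not anticipate a genuine obstacle. The only points requiring care are the cancellation of the identity terms—this is what lets Claims 1 and 2 reduce cleanly to the behavior of $T_n$—and the verification that the scaled identity $-\frac{1}{\eta}I_n$ retains stable convergence together with the surjectivity of $-\frac{1}{\eta}I$, so that the hypotheses of Theorem~\ref{SCR} are met exactly. In essence the whole proof is an exercise in matching the given stable and compact convergence data to the structural hypotheses of Theorem~\ref{SCR}.
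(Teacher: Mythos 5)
Your proof is correct and follows essentially the same route as the paper: bound the two summands separately for Claim 1, use the triangle inequality (with the exact cancellation of the identity terms) for Claim 2, and apply Theorem~\ref{SCR} with the decomposition into a stably convergent part and a compactly convergent part for Claim 3. The only cosmetic difference is that you obtain the uniform bound on $\|T_n\|$ directly from Theorem~2.8 of Vainikko, whereas the paper routes it through Theorem~\ref{UniformBoundedness}; both are valid.
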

\begin{proof}
\begin{itemize}
\item[1.] Since $X_n \subset X$ and $p_n$ are the $L^2$-projections, we clearly have that $p_n \in \mathcal{L}(X, X_n)$ and $p_n X = X_n$.
Furthermore, since $p_n$ are orthogonal projections, \eqref{210} is satisfied by taking $x=x_n$ and $b=1$.
Since $T_n \to T$ discretely, $T_n$ are uniformly bounded in $n$ due to Theorem~\ref{UniformBoundedness}. It is clear that $I_n$ are uniformly bounded. 
Then $\|F_n(\eta)\| \le C$ due to the fact that $\Omega$ is compact.
\item[2.] For a fixed $\eta \in \Omega$ and $x \in X$, 
	\begin{eqnarray*}
		\left\|[F_n(\eta)p_n-p_n F(\eta)] x \right\| &=& \left\|(T_n p_n - p_n T)x - \frac{1}{\eta} (I_n p_n - p_n I) x \right \| \\
		&\le& \|(T_n p_n - p_n T)x \|  + \left\| \frac{1}{\eta} (I_n p_n - p_n I) x \right\|  \\
		&\to& 0 \quad \text{as } n \to \infty.
	\end{eqnarray*}
\item[3.] Since $T_n \to T$ compactly and $I_n \to I $ stably, then $F_n(\eta) \to F(\eta)$ regularly for each $\eta \in \Omega$ due to Theorem~\ref{SCR}.
\end{itemize}
\end{proof}

\begin{theorem}\label{FEMconvergence}
Let $X=L^2(D)$ and $X_n \subset X$ be a sequence of finite element spaces. Let $P=\{p_n\}$ where $p_n: X \to X_n$ is the $L^2$-projection.
Let $T: X \to X$ be a compact operator and $T_n: X_n \to X_n$ be a sequence of discrete operators. 
Assume that conditions (i) and (ii) hold.
Then for an eigenvalue $\lambda$ of $T$, there exist $n_0$ and a sequence of eigenvalues $\lambda_n$ of $T_n$, $n > n_0$, such that
$\lambda_n \to \lambda$ as $n \to \infty$.   
For any sequence $\lambda_n \in \sigma(T_n)$ with this convergence property and the associated eigenfunctions $x_n \in {\mathcal N}(F_n(\lambda_n))$,
$\|x_n\|=1$, it holds that 
\begin{eqnarray}
	\label{eigenvalueorder}	|\lambda_n - \lambda| &\le& C \epsilon_n^{1/\kappa}, \\
	\label{eigenvectororder}	\inf_{x \in G(\lambda)} \| x_n - p_n x\| &\le& C \epsilon_n^{1/\kappa},
	\end{eqnarray}
	where $\epsilon_n =  \left\| (T_np_n -p_nT) \right \|$ and $\kappa$ is the ascent of $\lambda$.
\end{theorem}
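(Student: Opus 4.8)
The plan is to recast the eigenvalue problem \eqref{Teig} as the eigenvalue problem \eqref{eigen} for the holomorphic operator function $F(\eta)=T-\frac1\eta I$ in \eqref{Feta}, together with its discrete counterpart $F_n(\eta)=T_n-\frac1\eta I_n$ in \eqref{FnEta}, and then invoke the abstract approximation result Theorem~\ref{Thm210}. The bulk of the work is to verify its hypotheses (b1)--(b4) (with $Y=X=L^2(D)$ and $q_n=p_n$). Since $X_n$ is finite dimensional, every $F_n(\eta)$ is automatically Fredholm of index zero, and $\eta\mapsto F_n(\eta)$ is holomorphic on $\Omega$ because $0\notin\Omega$; this gives (b1). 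Condition (i) yields $I_n\to I$ stably by the lemma asserting exactly that, and combined with Condition (ii), namely $T_n\to T$ compactly, Theorem~\ref{FnFregular} supplies the remaining three hypotheses in one stroke: the equiboundedness $\|F_n(\eta)\|\le C$ is (b2), the pointwise estimate $\|[F_n(\eta)p_n-p_nF(\eta)]x\|\to 0$ is (b3), and the regular convergence $F_n(\eta)\to F(\eta)$ is (b4). Finally $\rho(F)\neq\emptyset$ because $T$ is compact, so its nonzero spectrum is discrete and $\Omega$ meets $\rho(F)$.

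Granting (b1)--(b4), Theorem~\ref{Thm210} produces, for each $\lambda\in\sigma(F)$, an index $n_0$ and a sequence $\lambda_n\in\sigma(F_n)$ with $\lambda_n\to\lambda$, along with the two error estimates. It then remains to translate these from the $F$-picture to the $T$-picture. By construction $F(\lambda)x=0\iff Tx=\frac1\lambda x$ and $F_n(\lambda_n)x_n=0\iff T_n x_n=\frac1{\lambda_n}x_n$, so $\sigma(F)$ and $\sigma(F_n)$ are precisely the eigenvalues of $T$ and $T_n$ in the reciprocal convention of \eqref{Teig}, and the eigenelements coincide. The one point needing care is identifying the ascent $\kappa$ and generalized eigenspace $G(\lambda)$ in the conclusion with the classical Riesz index and generalized eigenspace of the compact operator $T$. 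This follows by writing $F(\eta)=-\eta^{-1}(I-\eta T)$: near any $\lambda\neq 0$ the function $F$ differs from the linear pencil $I-\eta T$ only by the invertible holomorphic scalar factor $-\eta^{-1}I$, so $F$ and $I-\eta T$ are equivalent operator functions and share the same partial multiplicities, ascent, and generalized eigenspace, which for the linear pencil are exactly the classical spectral data of $T$.

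The last step is to simplify the abstract error quantity
\[
\epsilon_n=\max_{|\eta-\lambda|\le\delta}\ \max_{v\in G(\lambda)}\ \bigl\|F_n(\eta)p_n v-q_n F(\eta)v\bigr\|_{X_n},
\]
where $q_n=p_n$. Expanding $F_n$ and $F$ and using that $p_n v\in X_n$, so that $I_n p_n v=p_n v=p_n I v$, the two identity contributions cancel exactly and
\[
F_n(\eta)p_n v-p_n F(\eta)v=(T_n p_n-p_n T)v,
\]
which is independent of $\eta$. Hence the abstract $\epsilon_n$ equals $\max_{v\in G(\lambda)}\|(T_n p_n-p_n T)v\|\le\|T_n p_n-p_n T\|$ over the unit sphere of the finite-dimensional space $G(\lambda)$. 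Since the estimates of Theorem~\ref{Thm210} are monotone upper bounds in $\epsilon_n$ (as $\epsilon_n\to 0$ and $\kappa\ge1$), we may replace $\epsilon_n$ by the larger operator norm $\|T_n p_n-p_n T\|$, which is the $\epsilon_n$ asserted in the statement, and substituting into the two error estimates yields the claimed bounds.

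The main obstacle I anticipate is not the convergence machinery, which Theorems~\ref{SCR}, \ref{FnFregular} and \ref{Thm210} hand over almost directly, but the bookkeeping bridging the nonlinear operator-function formulation and the classical one: confirming rigorously that $G(\lambda)$ and $\kappa$ are the classical generalized eigenspace and Riesz index of $T$ (via the equivalence $F(\eta)=-\eta^{-1}(I-\eta T)$), and carrying out the cancellation that collapses the $\eta$-dependent $\epsilon_n$ to the clean $L^2$ operator norm $\|T_n p_n-p_n T\|$.
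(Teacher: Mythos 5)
Your proposal is correct and follows essentially the same route as the paper: verify hypotheses (b1)--(b4) for $F(\eta)=T-\tfrac1\eta I$ and $F_n(\eta)=T_n-\tfrac1\eta I_n$ (using condition (i) for stable convergence of $I_n$, condition (ii) plus Theorem~\ref{FnFregular} for equiboundedness, pointwise and regular convergence), then apply Theorem~\ref{Thm210} and collapse $\epsilon_n$ to $\|T_np_n-p_nT\|$ via the cancellation $I_np_nv=p_nIv$. Your additional remarks identifying $\kappa$ and $G(\lambda)$ with the classical Riesz data of $T$ through the factorization $F(\eta)=-\eta^{-1}(I-\eta T)$ are a welcome elaboration of a point the paper leaves implicit, but do not change the argument.
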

\begin{proof}
Let $\Omega \subset \mathbb C$ be compact such that $\lambda \in \Omega$ and $\lambda \notin \partial \Omega$. 
Since $T$ is compact, for $F(\cdot)$ defined in \eqref{Feta}, $\rho(F) \ne \emptyset$. 

We check the conditions (b1)-(b4).
Since $0 \notin \Omega$, $F(\eta), \eta \in \Omega,$ is holomorphic Fredholm operator of index zero and thus (b1) holds.
Since $\Omega$ is a compact set, $\frac{1}{\eta} I_n$ is equibounded. The operators $T_n$ is uniformly bounded since $T$ is bounded and (ii) is satisfied (see Theorem (2.55) in \cite{Vainikko1976}).  
Thus $\{F_n(\cdot)\}$ is equibounded, i.e., (b2) holds. 
For $\eta \in \Omega$ and $x \in X$, (b3) holds since 
\begin{eqnarray*}
\|[F_n(\eta)p_n-p_n F(\eta)] x\| &=& \left\| \left[ \left(T_n - \frac{1}{\eta} I_n\right)p_n - p_n \left(T-\frac{1}{\eta}I \right) \right] x \right\| \\
&=&  \left\| \left[ \left(T_np_n -p_nT\right) -  \left( \frac{1}{\eta} I_np_n-\frac{1}{\eta}p_nI \right) \right] x \right\| \\
&\le& \left\| (T_np_n -p_nT)x \right \|  \\
&\to& 0
\end{eqnarray*}
due to conditions (i) and (ii) and the fact that $(I_np_n -p_nI)x =0$. The regular convergence of $F_n(\eta)$ to $F$ follows Theorem~\ref{FnFregular} and thus (b4) holds.

Finally, we have that 
\[
\max_{|\eta-\lambda| \le \delta}\max_{x \in G(\lambda)} \| F_n(\eta)p_n x - p_n F(\eta) x\| = \max_{x \in G(\lambda)} \left\| (T_np_n -p_nT)x \right \| \le \left\| T_np_n -p_nT \right \|.
\]
The proof is complete by applying Theorem~\ref{Thm210}.
\end{proof}

\begin{remark}
In view of Remark~\ref{Tnprime}, if $X_n \subset X$, one has that
\begin{equation}\label{consistencyErr}
\| (T_n' -p_nT)x \| \le \|(T_n'-T) x\| + \|T_n x - p_n T_n x\|.
\end{equation}
Hence the consistency error $\epsilon_n$ is bounded by the sum of the error of the finite element solution and the approximation error of the discrete space.
\end{remark}

The above theorem (c.f. Theorem \ref{Thm210}) claims that all eigenvalues and eigenfunctions are approximated correctly. 
Note that condition (b3) (point-wise convergence) is not sufficient to rule out spurious discrete eigenvalues. We refer the readers to \cite{Boffi2000MC} for some
mixed finite element methods that produce spurious eigenvalues.

We end this section with the following lemma which can be useful to study the convergence for the eigenvalue problem in a space different than $L^2(D)$.
\begin{lemma} Let $X$ and $Y$ be Banach spaces such that $Y \subset X$. 
Assume that the embedding of $Y$ into $X$ is compact. Let $T: X \to Y \subset X$ be a bounded linear operator. 
Then the restriction of $T$ on $Y$, $T|_Y: Y \to Y$, is compact.
\end{lemma}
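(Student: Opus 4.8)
The plan is to factor the restriction $T|_Y$ through the compact embedding and then invoke the standard fact that the composition of a compact operator with a bounded operator is compact. Let $\iota: Y \to X$ denote the embedding, which is compact by hypothesis. Since $T$ maps $X$ into $Y$, for $y \in Y$ we may write $T|_Y y = T(\iota y)$; that is, $T|_Y = T \circ \iota$ viewed as an operator from $Y$ to $Y$. The first step is simply to record that each factor is a bounded linear map: $\iota \in \mathcal{L}(Y, X)$ (compact) and $T \in \mathcal{L}(X, Y)$ (bounded), so $T|_Y = T \circ \iota \in \mathcal{L}(Y, Y)$ is well defined and bounded, and the entire content of the claim is that this composite is compact.

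The cleanest route is the sequential characterization of compactness. I would take an arbitrary bounded sequence $\{y_n\} \subset Y$, say $\|y_n\|_Y \le C$ for all $n$. Because $\iota$ is compact, the sequence $\{\iota y_n\}$ has a subsequence $\{\iota y_{n_k}\}$ converging in the norm of $X$ to some $x \in X$. Applying the bounded (hence continuous) operator $T: X \to Y$ yields $T|_Y y_{n_k} = T(\iota y_{n_k}) \to Tx$ in the norm of $Y$. Thus every bounded sequence in $Y$ admits a subsequence whose image under $T|_Y$ converges in $Y$, which is exactly the definition of $T|_Y$ being compact.

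The one point that requires genuine care — and the step I would write out explicitly — is the passage from convergence of $\iota y_{n_k}$ in the $X$-norm to convergence of $T(\iota y_{n_k})$ in the $Y$-norm. This uses precisely that $T$ is continuous \emph{into} $Y$, i.e. bounded with respect to the target norm $\|\cdot\|_Y$ rather than merely $\|\cdot\|_X$; it is the hypothesis $T \in \mathcal{L}(X, Y)$ that supplies this, so no estimate beyond recording the definitions is needed. It is worth emphasizing that no compactness of $T$ itself is assumed: all of the compactness entering the argument originates in the embedding $\iota$, and $T$ contributes only boundedness. Equivalently, one may phrase the argument set-theoretically — a bounded set $B \subset Y$ has relatively compact image $\iota(B)$ in $X$, and the continuous image $T|_Y(B) = T(\iota(B))$ is then relatively compact in $Y$ — but the subsequence formulation above is the most direct and is the version I would present.
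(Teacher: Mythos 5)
Your proof is correct and follows essentially the same route as the paper's: extract from a bounded sequence in $Y$ a subsequence convergent in $X$ via the compact embedding, then push it forward by the continuity of $T$ into $Y$. Your explicit remark that the convergence of $T(\iota y_{n_k})$ in the $Y$-norm relies on $T$ being bounded \emph{into} $Y$ makes precise a step the paper leaves implicit, but the argument is the same.
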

\begin{proof} Let $\{x_n\}, n \in \mathbb N$ be a bounded sequence in $Y$. Due to the compact embedding of $Y$ into $X$, there exists a convergent subsequence
$\{x_{n'}\}, n' \in N' \subset \mathbb N,$ in $X$. Let $x = \lim_{n' \to \infty} x_{n'}$ and $y = Tx  \in Y$. Clearly, $\lim_{n' \to \infty} Tx_{n'} = y$, i.e.,
$\{Tx_{n'}\}$ converges to $y \in Y$. The proof is complete. 
\end{proof}

\section{Dirichlet Eigenvalue Problem}
In this section, we analyze the convergence of several finite element methods for the Dirichlet eigenvalue problem. 
Let $D\subset \mathbb{R}^2$ be a bounded Lipschitz polygonal domain.
The Dirichlet eigenvalue problem is to find $\lambda\in \mathbb R$ and $u\neq 0$ such that
\begin{subequations}\label{possioneig}
\begin{align}
\label{possionEA}-\Delta u=\lambda u \qquad &\text{in } D,\\[1mm]
\label{possionEB} u = 0 \qquad &\text{on } \partial D.
\end{align}
\end{subequations}
The associated source problem is, given $f$, to find $u$ such that
\begin{subequations}\label{sourcepro}
\begin{align}
\label{possionEA}-\Delta u=f \qquad &\text{in } D,\\[1mm]
\label{possionEB} u = 0 \qquad &\text{on } \partial D.
\end{align}
\end{subequations}
%

For $f\in L^2(D)$, the weak formulation of \eqref{sourcepro} is to find $u\in H^1_0(D)$ such that
\begin{equation}\label{vsource}
a(u,v)=(f,v)\quad \text{for all }~v\in H^1_0(D),
\end{equation}
where
\[
a(u,v)=\int_D \nabla u\cdot\nabla \bar{v}~dx,\quad (f,v)=\int_D f \bar{v}~dx.
\]
The variational formulation for the eigenvalue problem is to find $\lambda \in \mathbb R$ and $u \in H_0^1(D)$ such that
\begin{equation}\label{veig}
a(u,v)=\lambda(u,v)\qquad \text{for all }~v\in H^1_0(D),
\end{equation}

There exists a unique solution $u \in H_0^1(D)$ to \eqref{vsource}. Furthermore, $u \in H^{1+\alpha}(D)$, where 
 $1/2 < \alpha \le 1$ ($\alpha = 1$ if $D$ is convex) is the elliptic regularity index (see, e.g., Sec. 3.2 in \cite{SunZhou2016}).
Due to the wellposedness of \eqref{vsource} and the compact imbedding of $H^1_0(D)$ into $L^2(D)$, 
there exists a compact solution operator to \eqref{vsource}
\begin{equation}
T: L^2(D)\rightarrow  L^2(D) \quad \text{such that} \quad Tf=u.
\end{equation}
Assuming that $\lambda \ne 0$, the Dirichlet eigenvalue problem is equivalent to the operator eigenvalue problem of finding $\lambda \in \mathbb R$ and $u \in L^2(D)$ such that
\begin{equation}\label{Tlambdau}
T(\lambda u)=u.
\end{equation}

Define a nonlinear operator function $F: \Omega \to \mathcal{L}(L^2(D), L^2(D))$ by
\begin{equation}\label{Flambda}
F(\eta):=T-\frac{1}{\eta}I, \quad \eta \in \Omega.
\end{equation}
Clearly, $\lambda$ is a Dirichlet eigenvalue if and only if $\lambda$ is an eigenvalue of $F(\cdot)$.
\begin{lemma} Let $\Omega\subset \mathbb{C}\backslash\{0\}$ be a compact set. Then
$F(\cdot):\Omega\rightarrow \mathcal{L}(L^2(D),L^2(D))$ is a holomorphic Fredholm operator function of index zero.
\end{lemma}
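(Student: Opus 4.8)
The plan is to verify the two defining properties of a holomorphic Fredholm operator function of index zero directly from the definition of $F(\eta) = T - \frac{1}{\eta}I$. First I would establish holomorphicity. Since $\Omega \subset \mathbb{C} \setminus \{0\}$, the scalar function $\eta \mapsto 1/\eta$ is holomorphic on $\Omega$, and $T$ and $I$ are fixed bounded operators independent of $\eta$. Thus $F(\cdot)$ is an operator-valued function that depends on $\eta$ only through the holomorphic scalar $-1/\eta$, so its complex derivative exists at every $\eta \in \Omega$ in the operator norm; explicitly $F'(\eta) = \frac{1}{\eta^2} I$. This step is routine and poses no difficulty.

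The substantive part is showing that $F(\eta)$ is Fredholm of index zero for each fixed $\eta \in \Omega$. The key observation is that $T: L^2(D) \to L^2(D)$ is compact, and the identity $I$ is Fredholm of index zero (trivially, $\mathcal{N}(I) = \{0\}$ and $\mathcal{R}(I) = L^2(D)$). I would write $F(\eta) = -\frac{1}{\eta}\bigl(I - \eta T\bigr)$ and invoke the classical Riesz--Schauder theory (Fredholm alternative) for compact operators: for a compact operator $K$ on a Banach space and a nonzero scalar $\mu$, the operator $I - \mu K$ is Fredholm of index zero, since its null space is finite-dimensional, its range is closed with finite codimension, and $\dim \mathcal{N}(I - \mu K) = \operatorname{codim}\mathcal{R}(I - \mu K)$. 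Applying this with $K = T$ and $\mu = \eta \neq 0$ gives that $I - \eta T$ is Fredholm of index zero, and multiplying by the nonzero scalar $-1/\eta$ (an invertible operator) preserves both the Fredholm property and the index. Hence $F(\eta)$ is Fredholm of index zero for every $\eta \in \Omega$.

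I expect the main obstacle, if any, to be merely the careful bookkeeping of why scalar multiplication by $-1/\eta$ does not alter the index, which follows immediately since $-\frac{1}{\eta}I$ is a bounded invertible operator and composition of a Fredholm operator with an invertible one preserves index. A cleaner alternative I might adopt is to note that $\eta \mapsto F(\eta)$ is a compact perturbation, $F(\eta) = -\frac{1}{\eta}I + T$, of the invertible (hence index-zero Fredholm) operator $-\frac{1}{\eta}I$; the stability of the Fredholm index under compact perturbations then yields $\operatorname{ind} F(\eta) = \operatorname{ind}\bigl(-\frac{1}{\eta}I\bigr) = 0$ directly. Either route closes the argument once holomorphicity is in hand, so the overall proof is short and the compactness of $T$ together with $0 \notin \Omega$ does all the real work.
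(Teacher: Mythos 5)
Your proof is correct and follows essentially the same route as the paper, which simply notes that $F(\cdot)$ is clearly holomorphic and that compactness of $T$ together with the Riesz--Schauder theory makes $F(\eta)$ Fredholm of index zero. You merely spell out the details (the factorization $F(\eta) = -\frac{1}{\eta}(I-\eta T)$ and the invariance of the index under multiplication by a nonzero scalar) that the paper leaves implicit.
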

\begin{proof}
 It is clear that $F(\cdot)$ is holomorphic in $\Omega$. 
 Since $T$ is compact and $I$ is the identity operator, $F(\lambda)$ is a Fredholm operator of index zero.
\end{proof}

Let $X_n$ be a finite element space associated with $\mathcal{T}_{h_n}$ endowed with the $L^2$-norm $\| \cdot\|$.
Let $p_n$ be the $L^2$-projection from $L^2(D)$ to $X_n$ such that $\|x_n-x\|\rightarrow 0$ as $n \rightarrow \infty$ for $x \in X$,
where $x_n = p_n x$. For example, if $X_n$ is the linear Lagrange element space, it holds that 
\begin{equation}\label{pnorder}
\|p_n x - x\| \le C h_n^r \|x\|_{H^r(D)}, \quad 0 \le r \le 2.
\end{equation}

Assume that there exists a finite element solution operator $T_n : X_n \rightarrow X_n$ such that $T_n f_n = u_n$. In general, one has the convergence of the finite element method for the source problem, i.e., 
\begin{equation}\label{Tnpnf}
\lim_{n \to \infty} \|T_n p_n f - p_n T f \| = 0 \quad \text{for all } f \in X.
\end{equation}

Next we investigate several finite element methods for the Dirichlet eigenvalue problem using the results in Section~\ref{FemConv}.

\subsection{Conforming Finite Element Method}
Let $X_n$ be the Lagrange element space equipped with the $L^2$-norm $\| \cdot\|_{X_n} = \| \cdot\|$. 
One has that $X_n \subset H_0^1(D) \subset X$.
The discrete formulation for the source problem \eqref{vsource} is as follows. For $f \in L^2(D)$, find $u_n\in X_n$ such that,
\begin{equation}\label{FemPro}
a(u_n, v_n)=(f_n,v_n) \quad \text{for all } v_n \in X_n,
\end{equation}
where $a(u_n, v_n):=(\nabla u_n, \nabla v_n)$ and $f_n = p_n f$.

The discrete problem \eqref{FemPro} has a unique solution $u_n$ such that $\|u_n\|\leq C\|f_n\|$.
Let $u$ and $u_n$ be the solutions of \eqref{vsource} and \eqref{FemPro}, respectively.
The classical finite element error analysis gives that (see, e.g., \cite{BrennerScott2008})
\begin{equation}\label{DEL2error}
\|u-u_n\|\leq Ch_n^{2\alpha}\|f\|.
\end{equation}
Let $T_n : X_n \rightarrow X_n$, $T_n f_n = u_n$ be the finite element solution operator of \eqref{FemPro}. 
Since 
\[
\|u_n - p_nu\| \le \|u_n - u\|+\|u-p_nu\|,
\] 
using \eqref{DEL2error}, one obtains that
\begin{equation}\label{DECTn}
\lim_{n \to \infty} \|T_n p_n f - p_n T f \| = 0 \quad \text{for all } f \in X.
\end{equation}

Define a discrete operator function $F_n: \Omega \to \mathcal{L}(X_n, X_n)$
\begin{equation}\label{Fhlambda}
F_n(\eta):=T_n-\frac{1}{\eta}I_n.
\end{equation}
\begin{lemma}
$F_n \to F$ regularly.
\end{lemma}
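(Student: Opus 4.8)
The plan is to verify the hypotheses of Theorem~\ref{FnFregular}, which already packages the regular convergence conclusion once we know that $T_n \to T$ compactly and $I_n \to I$ stably. Since $X_n$ is the Lagrange element space with $X_n \subset X = L^2(D)$ and $p_n$ is the $L^2$-projection satisfying the approximation property \eqref{pnorder}, Condition (i) holds; hence by the earlier lemma $I_n \to I$ stably. It therefore remains to establish the compact convergence $T_n \to T$, after which the three conclusions of Theorem~\ref{FnFregular}---in particular that $F_n(\eta) \to F(\eta)$ regularly for each $\eta \in \Omega$---follow immediately via Theorem~\ref{SCR}.

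To obtain $T_n \to T$ compactly I would proceed in two stages, matching the structure of Lemma~\ref{U2C}. First I establish discrete convergence $T_n \to T$ (equivalently $T_n \overset{PQ}{\longrightarrow} T$): the estimate \eqref{DECTn}, namely $\|T_n p_n f - p_n T f\| \to 0$ for all $f \in X$, is exactly the pointwise-convergence half of the characterization of discrete convergence, and combined with uniform boundedness of $T_n$ (guaranteed by Theorem~\ref{UniformBoundedness}, whose hypothesis \eqref{210} is met with $b=1$ since $p_n$ is an orthogonal projection) this gives $T_n \to T$ discretely. Second, I upgrade discrete convergence to compact convergence. Here there are two routes already provided: either invoke Lemma~\ref{CompactConv} with $X' = H_0^1(D)$, using that $X_n \subset H_0^1(D)$ (the conforming property noted just before \eqref{FemPro}) and that $H_0^1(D)$ is compactly embedded in $L^2(D)$; or invoke Lemma~\ref{U2C} after noting that in the conforming case one actually has uniform convergence of $T_n$ to the compact operator $T$. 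I would use Lemma~\ref{CompactConv}, since its hypotheses are the most directly verified: given $\|x_n\| \le C$, the sequence $\{T_n x_n\}$ lies bounded in $H_0^1(D)$ by the stability bound $\|u_n\| \le C\|f_n\|$ together with the discrete $H^1$ a priori estimate, and compact embedding extracts a $P$-convergent subsequence.

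Finally, having $T_n \to T$ compactly and $I_n \to I$ stably, I apply Theorem~\ref{FnFregular} to conclude $F_n(\eta) \to F(\eta)$ regularly for every $\eta \in \Omega$, which is the claim.

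The main obstacle I anticipate is the $H_0^1(D)$-boundedness of $\{T_n x_n\}$ needed to apply Lemma~\ref{CompactConv}: the stability estimate \eqref{DECTn} and $\|u_n\| \le C\|f_n\|$ are stated in the $L^2$-norm, whereas compactness of the embedding requires a uniform \emph{$H^1$} bound on $T_n x_n = u_n$. This is standard for the conforming method---testing \eqref{FemPro} with $v_n = u_n$ gives $\|\nabla u_n\|^2 = (f_n, u_n) \le \|f_n\|\,\|u_n\| \le C\|f_n\|^2$ by Poincar\'e, so $\|u_n\|_{H^1_0(D)} \le C\|x_n\| \le C$---but it does rely on the conforming inclusion $X_n \subset H^1_0(D)$ and the coercivity of $a(\cdot,\cdot)$, which is precisely the point where this argument is easier than for the nonconforming or discontinuous Galerkin methods treated elsewhere in the paper. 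Once this uniform energy bound is in hand the remaining steps are routine applications of the cited lemmas and theorems.
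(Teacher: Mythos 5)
Your proposal is correct and follows essentially the same route as the paper: take $X'=H_0^1(D)$, use the uniform boundedness of $T_n:X_n\to X'$ together with the compact embedding of $H_0^1(D)$ into $L^2(D)$ and Lemma~\ref{CompactConv} to get compact convergence of $T_n$, then conclude regular convergence of $F_n$ via Theorem~\ref{SCR}. You additionally spell out the discrete convergence hypothesis and the uniform energy bound $\|\nabla u_n\|^2=(f_n,u_n)\le C\|f_n\|^2$, details the paper's proof asserts without writing out.
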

\begin{proof}
Let $X'=H_0^1(D)$. Since $T_n: X_n \to X', n \in \mathbb N$ are uniformly bounded, due to the compact embedding of $X'$ into $X$ and Lemma~\ref{CompactConv},
$T_n \to T$ compactly. Lemma~\ref{SCR} implies that $F_n \to F$ regularly. The proof is complete.
\end{proof}
The convergence for the Lagrange finite element method of finding $\lambda_n$ and $u_n$ such that
\[
a(u_n, v_n)= \lambda_n (u_n,v_n) \quad \text{for all } v_n \in X_n,
\]
follows from \eqref{pnorder}, \eqref{DEL2error}, and Theorem~\ref{FEMconvergence}.
\begin{theorem}\label{CDE} 
Let $\lambda \in \sigma(F)$. There exists $n_0 \in {\mathbb N}$ and a sequence $\lambda_n \in \sigma(F_n)$,
	$n \ge n_0$, such that $\lambda_n \to \lambda$ as $n \to \infty$. For any sequence $\lambda_n \in \sigma(F_n)$
	with this convergence property and the associated eigenfunction $u_n, \|u_n\|=1$, one has that
	\begin{equation}\label{DEConfErr}
		|\lambda_n - \lambda| \le C h_n^{2\alpha} \quad \text{and} \quad \|u_n-u\| \le Ch_n^{2\alpha},
	\end{equation}
	where $u$ is some eigenfunction associated to $\lambda$ with $\|u\|=1$.
\end{theorem}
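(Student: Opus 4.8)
The plan is to obtain the statement as a specialization of the general convergence result Theorem~\ref{FEMconvergence} to the conforming Lagrange discretization, so the first task is to check that hypotheses (i) and (ii) of that theorem hold. Condition (i), the approximation property $\|p_n x - x\| \to 0$, is exactly \eqref{pnorder}. Condition (ii), the compact convergence $T_n \to T$, was already established in the preceding lemma: since $X_n \subset H_0^1(D)$, the operators $T_n$ map into $H_0^1(D)$ with uniformly bounded $H^1$-norm, and the compact embedding $H_0^1(D) \hookrightarrow L^2(D)$ together with Lemma~\ref{CompactConv} yields $T_n \to T$ compactly. Hence Theorem~\ref{FEMconvergence} applies and furnishes, for each $\lambda \in \sigma(F)$, the sought $n_0$ and sequence $\lambda_n \in \sigma(F_n)$ converging to $\lambda$, together with the abstract estimates $|\lambda_n - \lambda| \le C \epsilon_n^{1/\kappa}$ and $\inf_{x \in G(\lambda)} \|u_n - p_n x\| \le C \epsilon_n^{1/\kappa}$, where $\epsilon_n = \|T_n p_n - p_n T\|$.

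Next I would pin down the two quantities entering these estimates. For the consistency error I would use the splitting $\|T_n p_n f - p_n T f\| = \|u_n - p_n u\| \le \|u_n - u\| + \|u - p_n u\|$ for $f \in X$, bounding the first term by the finite element $L^2$-estimate \eqref{DEL2error}, namely $\|u_n - u\| \le C h_n^{2\alpha}\|f\|$, and the second by the projection estimate \eqref{pnorder} combined with the elliptic regularity $u = Tf \in H^{1+\alpha}(D)$, giving $\|u - p_n u\| \le C h_n^{1+\alpha}\|f\|$. Since $1/2 < \alpha \le 1$ forces $2\alpha \le 1+\alpha$, the first contribution dominates for small $h_n$ and $\epsilon_n \le C h_n^{2\alpha}$.

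The decisive structural point is that the ascent satisfies $\kappa = 1$. Because the form $a(\cdot,\cdot)$ is Hermitian, a direct computation gives $(Tf,g) = (f,Tg)$ for all $f, g \in X$, so $T$ is self-adjoint; a compact self-adjoint operator has only semisimple nonzero eigenvalues, whence every $\lambda \in \sigma(F)$ has ascent $\kappa = 1$ and $G(\lambda) = \mathcal{N}(F(\lambda))$. With $\kappa = 1$ the root disappears and the eigenvalue estimate becomes $|\lambda_n - \lambda| \le C \epsilon_n \le C h_n^{2\alpha}$, which is the first assertion.

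It remains to convert the abstract eigenfunction estimate into the stated bound $\|u_n - u\| \le C h_n^{2\alpha}$ for a normalized eigenfunction $u$. Choosing $x^{\ast} \in G(\lambda)$ that nearly attains the infimum, one has $\|u_n - p_n x^{\ast}\| \le C h_n^{2\alpha}$, and adding $\|p_n x^{\ast} - x^{\ast}\| \le C h_n^{1+\alpha}$ (again by \eqref{pnorder} and $x^{\ast} \in H^{1+\alpha}(D)$) gives $\|u_n - x^{\ast}\| \le C h_n^{2\alpha}$. Since $\|u_n\| = 1$, the reverse triangle inequality yields $\bigl|\,\|x^{\ast}\| - 1\,\bigr| \le C h_n^{2\alpha}$, so $u := x^{\ast}/\|x^{\ast}\|$ is a unit eigenfunction with $\|u_n - u\| \le C h_n^{2\alpha}$ for $n$ large. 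I expect the main obstacle to be precisely this final bookkeeping together with the $\kappa = 1$ observation: the clean rate $h_n^{2\alpha}$ (rather than $h_n^{2\alpha/\kappa}$) genuinely relies on self-adjointness, and the passage from the infimum over $G(\lambda)$ to a single normalized eigenfunction must be carried out so that the normalization correction does not degrade the order.
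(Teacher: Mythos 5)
Your proposal is correct and follows essentially the same route as the paper: verify conditions (i) and (ii) of Theorem~\ref{FEMconvergence} (with compact convergence of $T_n$ obtained via the uniform $H_0^1$-bound and Lemma~\ref{CompactConv}), note $\kappa=1$ from self-adjointness of $T$, and bound $\epsilon_n\le Ch_n^{2\alpha}$ using \eqref{DEL2error}. Your explicit treatment of the $\|u-p_nu\|$ term and of the normalization of the limiting eigenfunction only fills in bookkeeping the paper leaves implicit.
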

\begin{proof}
Let $\Omega \subset \mathbb C$ be a simply connected set such that $0 \notin \Omega$ and $\lambda \in \Omega$. 
It is clear that $\rho(F) \cap \Omega \ne \emptyset$.
For $X=Y=L^2(D)$, $X_n =Y_n$ and $p_n=q_n$ being the $L^2$-projection from $X$ onto $X_n$, condition (i) in Theorem~\ref{FEMconvergence} holds. Due to \eqref{DECTn}, (ii) of Theorem~\ref{FEMconvergence} holds. Since $T$ is self-adjoint,
each eigenvalue has ascent $\kappa = 1$. From \eqref{DEL2error}, it holds that $\epsilon_n \le Ch_n^{2\alpha}$. Hence \eqref{DEConfErr} holds due to Theorem~\ref{FEMconvergence}.
\end{proof}
\begin{remark}
For convex domains, one has that $\alpha = 1$ and obtains second order of convergence for the eigenvalues and eigenfunctions using the linear Lagrange element.
\end{remark}
\begin{remark}
We refer the readers to \cite{XiaoEtal2020AML} which uses the holomorphic operator approach but a different proof of regular convergence.
\end{remark}

\subsection{Interior Penalty Discontinuous Galerkin Methods}
We consider the interior penalty discontinuous Galerkin methods for the Dirichlet eigenvalue problem. Following the notations in \cite{Antonietti2006CMAME}, 
let $h_K$ be the diameter of the element $K \in {\mathcal T}_n$. Denote by ${\mathcal F}_n^I$ and ${\mathcal F}_n^B$ be the sets of the interior edges and
boundary edges of ${\mathcal T}_n$, respectively. Let ${\mathcal F}_n := {\mathcal F}_n^I \cup {\mathcal F}_n^B$.
Let ${\boldsymbol w}$ and $v$ be piecewise smooth vector-valued and scalar-valued functions. Let $F \in {\mathcal F}_n^I$ be an interior face share by two elements $K^+$ and $K^-$
with outward norm ${\boldsymbol \nu}^\pm$. Denote by ${\boldsymbol w}^\pm$ and $v^\pm$ on $\partial K^\pm$ taken from within $K^\pm$, respectively. The jumps
across $F$ are defined by
\[
\llbracket {\boldsymbol w} \rrbracket = {\boldsymbol w}^+ \cdot {\boldsymbol \nu}^+ + {\boldsymbol w}^- \cdot {\boldsymbol \nu}^-, \quad
\llbracket v \rrbracket = v^+ {\boldsymbol \nu}^+ + v^- {\boldsymbol \nu}^-
\]
and the averages are defined by
\[
\mv{{\boldsymbol w}} = \frac{1}{2} \left( {\boldsymbol w}^+ +{\boldsymbol w}^-\right), \quad \mv{v} = \frac{1}{2} \left(v^+ +v^-\right).
\]
For $F \in {\mathcal F}_n^B$, one simply defines
\[
\llbracket {\boldsymbol w} \rrbracket = {\boldsymbol w} \cdot {\boldsymbol \nu}, \quad \llbracket v \rrbracket = v {\boldsymbol \nu}, \quad
\mv{{\boldsymbol w}} ={\boldsymbol w}, \quad \mv{v} = v.
\]
Define the discontinuous Galerkin space $X_n$ by
\begin{equation}\label{DGSpace}
	X_n:=\{ v \in L^2(D) : v|_{K} \in P^\ell(K), K \in {\mathcal T}_n \},
\end{equation}
where $P^\ell(K)$ is the space of polynomials of degree at most $\ell \ge 1$ on $K$. 

Let $p_n$ be the $L^2$-projection of $f \in L^2(D)$ to $X_n$ such that
\begin{equation}\label{L2projection}
(p_n f, v_n) = (f, v_n) \quad \text{for all } v_n \in X_n.
\end{equation} 
Then condition (i) in Theorem~\ref{FEMconvergence} holds. 

We consider the  symmetric interior penalty (SIP) DG methods in primal form for the Poisson equation. Find $u_n \in X_n$ such that
\begin{equation}\label{DGmethods}
a_n(u_n, v_n) = (f_n, v_n) \quad \text{for all } v_n \in X_n
\end{equation}
where $a_n: X_n \times X_n \to \mathbb C$ is defined by
\begin{eqnarray*}
a_n(u_n, v_n) &=& (\nabla_n u_n, \nabla_n v_n) - \int_{{\mathcal F}_n} \mv{\nabla_n u_n} \cdot  \llbracket \bar{v}_n \rrbracket ds \\
&& \quad - \int_{{\mathcal F}_n}  \mv{\nabla_n \bar{v}_n} \cdot  \llbracket u_n \rrbracket ds - s_n(u_n, v_n) \quad \text{for } u_n, v_n \in X_n.
\end{eqnarray*}
The interior penalty family are defined by choosing the the stabilization form $s_n(\cdot, \cdot)$ as
\[
s_n(u_n, v_n) = \int_{{\mathcal F}_h} \gamma h_n^{-1}  \llbracket u_n \rrbracket  \llbracket \bar{v}_n \rrbracket ds,
\]
for $\gamma > 0$ independent of the mesh size. 

The finite element space $X_n$ can be endowed with a second norm $\| \cdot\|_{n}$:
\begin{equation}\label{VnNorm}
\|v_n\|_{n} = \|\nabla_n v_n\|+\|h^{-1/2} \llbracket v_n  \rrbracket \|^2_{{\mathcal F}_n},
\end{equation}
where $\nabla_n$ is the element-wise gradient operator. For $f \in H_0^1(\Omega)$ or $f \in X_n$, the Poincar\'{e} inequality holds (Property 1 in \cite{Antonietti2006CMAME})
\begin{equation}\label{Poincare}
\|f\| \le C \|f\|_{n},
\end{equation}
where $C$ is a constant depends on $D$ but not on the mesh.


For $\gamma$ large enough, it is well-known that there exists a unique solution $u_n$ to \eqref{DGmethods}.
Denote the discrete solution operator by $T_n: X_n \to X_n$.
According to Property 2 in \cite{Antonietti2006CMAME}, for the discrete solution $u_n$ to \eqref{DGmethods} and the exact solution $u$ to \eqref{vsource}, it holds that
\begin{equation}\label{DGerror}
\|u-u_n\|_{n} \le C h^\alpha \|f\| \quad \text{for } f \in L^2(D).
\end{equation}
Using the approximation property of $p_n$ and the Poincar\'{e} inequality \eqref{Poincare}, for $f \in L^2(D)$, one has that
\begin{eqnarray}
\nonumber \|T_n p_n f - p_n T f\| &=& \|u_n - p_nu\| \\
\nonumber &\le&\|u_n - u\| + \|u-p_n u\| \\
\nonumber &\le&\|u_n - u\|_{n} + \|u-p_n u\| \\
\nonumber &\le& C h^\alpha \|f\| + C h^\alpha \|u\|_{H^1(D)} \\
\label{DGorder} &\le& C h^\alpha \|f\|.
\end{eqnarray}
Similarly, we define the operator function $F_n: \Omega \to \mathcal{L}(X_n, X_n)$ such that
\[
F_n(\eta) : = T_n - \frac{1}{\eta}I_n,
\]
where $X_n$ is the discontinuous Galerkin space defined in \eqref{DGSpace}, $T_n$ is the discrete solution operator to \eqref{DGmethods},
and $I_n$ is the identity operator from $X_n$ to $X_n$.
\begin{theorem}\label{DGDirichlet} 
Let $\lambda \in \sigma(F)$. There exists $n_0 \in {\mathbb N}$ and a sequence $\lambda_n \in \sigma(F_n)$,
	$n \ge n_0$, such that $\lambda_n \to \lambda$ as $n \to \infty$. For any sequence $\lambda_n \in \sigma(F_n)$
	with this convergence property and the associated eigenfunction $u_n, \|u_n\|=1$, one has that
	\begin{equation}\label{DEConfErr}
		|\lambda_n - \lambda| \le C h_n^{\alpha} \quad \text{and} \quad \|u_n-u\| \le Ch_n^{\alpha},
	\end{equation}
	where $u$ is some eigenfunction associated to $\lambda$ with $\|u\|=1$.
\end{theorem}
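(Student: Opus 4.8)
The plan is to verify that the hypotheses of Theorem~\ref{FEMconvergence} are satisfied for the SIP DG setting, since that general theorem already delivers the conclusion \eqref{DEConfErr} once conditions (i) and (ii) hold. The proof therefore reduces to a checklist analogous to the one carried out in Theorem~\ref{CDE}. First I would fix $\Omega \subset \mathbb{C}\setminus\{0\}$ to be a compact simply connected set containing the given $\lambda$ with $\lambda \notin \partial\Omega$, and note that $\rho(F)\cap\Omega \ne \emptyset$ because $T$ is compact. Condition (i) is already recorded as holding here: the $L^2$-projection $p_n$ onto the DG space $X_n$ of \eqref{DGSpace} satisfies $\|p_n x - x\| \to 0$, so $X_n \subset X = L^2(D)$ and the approximation property is in place.

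The substance of the argument is condition (ii), the compact convergence $T_n \to T$. I would establish this via Lemma~\ref{CompactConv}: the estimate \eqref{DGorder} gives $\|T_n p_n f - p_n T f\| \to 0$ for all $f \in X$, which is exactly discrete convergence $T_n \to T$, and it moreover shows (through \eqref{DGerror} in the stronger norm $\|\cdot\|_n$) that $\{T_n x_n\}$ is bounded in the broken Sobolev-type space $X' = (X_n, \|\cdot\|_n)$ whenever $\|x_n\|$ is bounded. The point I would emphasize is that $X'$ equipped with $\|\cdot\|_n$ embeds compactly into $L^2(D)$ — this is the DG analogue of the compact embedding $H_0^1 \hookrightarrow L^2$ and is the place where the broken-norm control from \eqref{DGerror} does its work. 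Granting that compact embedding, Lemma~\ref{CompactConv} yields $T_n \to T$ compactly, hence condition (ii).

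With (i) and (ii) in hand, Theorem~\ref{FEMconvergence} immediately produces $n_0$, the sequence $\lambda_n \in \sigma(F_n)$ converging to $\lambda$, and the abstract rate estimates in terms of $\epsilon_n = \|T_n p_n - p_n T\|$. To convert these into the concrete rate $h_n^\alpha$ of \eqref{DEConfErr}, I would bound $\epsilon_n \le C h_n^\alpha$ directly from \eqref{DGorder}, and invoke the fact that $T$ is self-adjoint so that every eigenvalue has ascent $\kappa = 1$; this turns $\epsilon_n^{1/\kappa}$ into $\epsilon_n$ and gives both $|\lambda_n - \lambda| \le C h_n^\alpha$ and the eigenfunction estimate.

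The main obstacle I anticipate is the compact embedding of $(X_n,\|\cdot\|_n)$ into $L^2(D)$ needed for Lemma~\ref{CompactConv}. Unlike the conforming case, the DG spaces are not contained in a single fixed $H_0^1(D)$; the broken gradient norm $\|\cdot\|_n$ depends on $n$, and jump terms appear, so one cannot naively cite Rellich compactness. The care required is to argue, using the uniform boundedness of the DG solutions in $\|\cdot\|_n$ together with the penalty control on the jumps $\llbracket u_n \rrbracket$, that the relevant sequence has a subsequence converging in $L^2$ — either by a discrete compactness result for broken spaces or, more cleanly in the present framework, by exploiting that $T_n x_n = u_n$ solves the source problem and hence inherits regularity so that $\{u_n\}$ is bounded in $H^{1+\alpha}$-type norms whose image is precompact in $L^2$. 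Everything else in the proof is a mechanical application of the stated theorems.
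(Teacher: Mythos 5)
Your overall strategy (verify conditions (i) and (ii) of Theorem~\ref{FEMconvergence}, bound $\epsilon_n$ by \eqref{DGorder}, use self-adjointness to get $\kappa=1$) is the right one, but the route you choose for condition (ii) has a genuine gap that you yourself flag and then do not close. Lemma~\ref{CompactConv} requires a \emph{single fixed} Banach space $X'$ with $X_n \subset X'$ for all $n$ and $X'$ compactly embedded in $X$. For the SIP DG spaces there is no such space: the broken norm $\|\cdot\|_n$ is mesh-dependent, the functions in $X_n$ are discontinuous, and the jump/penalty terms do not assemble into a fixed Sobolev space containing every $X_n$. Saying ``granting that compact embedding'' and gesturing at discrete compactness results or at regularity of the continuous solution does not constitute a proof; in particular the suggestion that $T_n x_n$ ``inherits regularity so that $\{u_n\}$ is bounded in $H^{1+\alpha}$-type norms'' is false as stated, since $T_n x_n$ is a piecewise polynomial, not an element of $H^{1+\alpha}(D)$.

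The paper closes condition (ii) by a different and much shorter observation: the estimate \eqref{DGorder} is not merely pointwise convergence but an \emph{operator-norm} bound, $\|T_n p_n - p_n T\| \le C h_n^\alpha$, i.e., $T_n \to T$ uniformly in the sense of the discrete approximation scheme. Lemma~\ref{U2C} then converts uniform convergence into compact convergence using only the compactness of the continuous operator $T$ (the convergent subsequence is extracted from $\{T x_n\}$, not from $\{T_n x_n\}$), so no compactness property of the discrete broken spaces is ever needed. You should replace your appeal to Lemma~\ref{CompactConv} with an appeal to Lemma~\ref{U2C}; with that substitution the rest of your argument goes through exactly as you wrote it.
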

\begin{proof}
From \eqref{DGorder}, $T_n \to T$ uniformly. 
Thus $F_n(\eta)$ converges to $F(\eta)$ regularly due to Lemma~\ref{U2C}. Then \eqref{DEConfErr} follows Theorem~\ref{FEMconvergence}.
\end{proof}
We refer the readers to \cite{XiJi2021} which uses the holomorphic operator approach but a different discrete norm for $X_n$ and a different proof.
\begin{remark}
Note that the convergence is not optimal since \eqref{DGorder} is obtained using the Poincar\'{e} inequality and the convergence of the DG solution in $\|\cdot\|_{X_n}$ norm.
If a higher order of convergence of $u_n$ in $L^2$ norm is available, the convergence orders in Theorem~\eqref{DGDirichlet} can be improved.
\end{remark}

\subsection{Nonconforming Crouzeix-Raviart Method}
We consider the nonconforming piecewise linear finite element space of Crouzeix-Raviart \cite{Braess2001}:
\begin{eqnarray}
\nonumber &&X_n :=\{v: v|_K \in {\mathcal P}_1 \text{ is continuous at the midpoints of the edges of } K\\
\label{CRSpace} && \qquad \qquad \qquad \text{ and }  v = 0 \text{ at the midpoints on } \partial D\},
\end{eqnarray}
where ${\mathcal P}_1$ denotes the space of polynomials of degree less or equal to $1$.

Define the bilinear form on $X_n$
\[
a_n(u_n, v_n):=\sum_{K \in {\mathcal T}_n} \int_K \nabla u_n \cdot \nabla v_n dK, \quad u_n, v_n \in X_n.
\]
The discrete problem is to find $u_n \in X_n$ such that
\begin{equation}\label{CRsource}
a_n(u_n, v_n) = (f_n, v_n) \quad \text{for all } v_n \in X_n,
\end{equation}
where $f_n = p_n f$.
There exists a unique solution $u_n$ to \eqref{CRsource}. Denote the discrete solution operator to be $T_n: X_n \to X_n$ such that
\[
u_n = T_n f_n.
\] 
Using the error estimate in $L^2$-norm (Theorem 1.5 of Chp. III in \cite{Braess2001}) and the property of the $L^2$-projection, one has that
\begin{equation}\label{CRconvergence}
\| p_nTf - T_n p_n f\| \le C h^{2\alpha} \|f\|,
\end{equation}
which implies $T_n \to T$ uniformly.

Define the discrete operator function $F_n: \Omega \to \mathcal{L}(X_n, X_n)$ such that
\[
F_n(\eta) : = T_n - \frac{1}{\eta}I_n,
\]
where $X_n$ is the non-conforming Crouzeix-Raviart finite element space defined in \eqref{CRSpace}, $T_n$ is the discrete solution operator to \eqref{CRsource},
and $I_n$ is the identity operator from $X_n$ to $X_n$. Using Theorem~\ref{FEMconvergence}, 
we obtain the convergence of the non-conforming Crouzeix-Raviart method.
It is proof is similar to the previous ones and thus omitted. 
\begin{theorem}\label{CRDirichlet} 
Let $\lambda \in \sigma(F)$. There exists $n_0 \in {\mathbb N}$ and a sequence $\lambda_n \in \sigma(F_n)$,
	$n \ge n_0$, such that $\lambda_n \to \lambda$ as $n \to \infty$. For any sequence $\lambda_n \in \sigma(F_n)$
	with this convergence property and the associated eigenfunction $u_n, \|u_n\|=1$, one has that
	\begin{equation}\label{ConfErr}
		|\lambda_n - \lambda| \le C h_n^{2\alpha} \quad \text{and} \quad \|u_n-u\| \le Ch_n^{2\alpha},
	\end{equation}
	where $u$ is some eigenfunction associated to $\lambda$ with $\|u\|=1$.
\end{theorem}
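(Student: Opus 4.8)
The plan is to reduce the statement to a direct application of Theorem~\ref{FEMconvergence}, exactly as in the conforming and discontinuous Galerkin cases, so that the only substantive work is verifying conditions (i) and (ii) for the Crouzeix-Raviart setup and then identifying the two exponents. First I would fix a compact, simply connected $\Omega \subset \mathbb{C}$ with $0 \notin \Omega$, $\lambda \in \Omega$, and $\lambda \notin \partial \Omega$. Since $T$ is compact, $\rho(F) \cap \Omega \ne \emptyset$, so $F(\cdot)$ defined by \eqref{Flambda} is a holomorphic Fredholm operator function of index zero on $\Omega$ whose eigenvalues are precisely the Dirichlet eigenvalues, and $F_n(\cdot)$ is the associated discrete operator function.

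Next I would check the two hypotheses. Condition (i) holds because $p_n$ is the $L^2$-projection onto $X_n$ and the piecewise-linear Crouzeix-Raviart space has the approximation property, so $\|p_n x - x\| \to 0$ for every $x \in X = L^2(D)$. For condition (ii), the essential point is that the method is nonconforming, so $X_n \not\subset H_0^1(D)$ and Lemma~\ref{CompactConv} (which relies on a compact Sobolev embedding of a space containing $X_n$) is \emph{not} directly available. Instead I would route through uniform convergence: the $L^2$ error estimate \eqref{CRconvergence} reads $\|p_n T f - T_n p_n f\| \le C h_n^{2\alpha}\|f\|$, which is precisely $\|T_n p_n - p_n T\| \le C h_n^{2\alpha}$ and hence gives $T_n \to T$ uniformly. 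Because $X_n \subset L^2(D) = X$ and $T$ is compact, Lemma~\ref{U2C} upgrades this to $T_n \to T$ compactly, establishing (ii).

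With (i) and (ii) in hand, Theorem~\ref{FEMconvergence} yields $n_0$ and a sequence $\lambda_n \in \sigma(F_n)$ with $\lambda_n \to \lambda$, together with the abstract bounds $|\lambda_n - \lambda| \le C \epsilon_n^{1/\kappa}$ and $\inf_{x \in G(\lambda)} \|u_n - p_n x\| \le C \epsilon_n^{1/\kappa}$, where $\epsilon_n = \|T_n p_n - p_n T\|$. To read off the stated rates I would identify the two constants: \eqref{CRconvergence} gives $\epsilon_n \le C h_n^{2\alpha}$, and since $T$ is the self-adjoint solution operator of the Dirichlet Laplacian, every eigenvalue has ascent $\kappa = 1$, so $\epsilon_n^{1/\kappa} = \epsilon_n \le C h_n^{2\alpha}$. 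For the eigenfunction bound in \eqref{ConfErr}, since $\kappa = 1$ one has $G(\lambda) = \mathcal{N}(F(\lambda))$; choosing $u$ in this eigenspace realizing the infimum gives $\|u_n - p_n u\| \le C h_n^{2\alpha}$, and then $\|u_n - u\| \le \|u_n - p_n u\| + \|p_n u - u\|$, where the second term is $O(h_n^{1+\alpha})$ by the approximation property applied to the $H^{1+\alpha}$-regular eigenfunction; as $1 + \alpha \ge 2\alpha$, the $h_n^{2\alpha}$ term dominates and \eqref{ConfErr} follows.

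I would expect no genuine obstacle: the nonconforming character of the method is already absorbed into the quoted $L^2$ estimate \eqref{CRconvergence}, and from the operator-theoretic side the argument is identical to the DG case. The only point demanding care is the deliberate passage through uniform convergence and Lemma~\ref{U2C} rather than Lemma~\ref{CompactConv}, forced by the broken inclusion $X_n \subset H_0^1(D)$; conversely, it is the sharper $h_n^{2\alpha}$ rate of \eqref{CRconvergence} (rather than the energy-norm-limited $h_n^{\alpha}$ used in the DG argument) that produces the improved convergence order in \eqref{ConfErr}.
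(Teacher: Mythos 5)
Your proposal is correct and follows essentially the same route the paper intends: the paper omits this proof as ``similar to the previous ones,'' and your argument---using the $L^2$ estimate \eqref{CRconvergence} to get uniform convergence, invoking Lemma~\ref{U2C} (rather than Lemma~\ref{CompactConv}, which is unavailable since $X_n \not\subset H^1_0(D)$) to obtain compact convergence, and then applying Theorem~\ref{FEMconvergence} with $\kappa=1$ and $\epsilon_n \le Ch_n^{2\alpha}$---is exactly the intended combination of the conforming and DG arguments. Your additional care in converting $\inf_{x\in G(\lambda)}\|u_n - p_nx\|$ into the stated bound $\|u_n-u\|$ via the triangle inequality and $1+\alpha \ge 2\alpha$ is a detail the paper glosses over but is handled correctly.
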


\begin{remark}
The Crouzeix-Raviart element is non-conforming in the sense that it is not a subspace of $H^1_0(D)$, which is the solution space for \eqref{vsource}. 
Note that we use $L^2(D)$ other than $H^1(D)$. Similar comment applies to the Morley element method for the biharmonic eigenvalue problem in the next section.
\end{remark}


\section{The Biharmonic Eigenvalue Problem}
We now consider the biharmonic eigenvalue problem. 
Let $D$ denote a bounded Lipschitz polygonal domain in $\mathbb{R}^2$ with boundary $\partial D$. Let $\nu$ denote the unit outward normal to $\partial D$. 
The biharmonic eigenvalue problem with clamped plate boundary condition is to find $\lambda \in \mathbb R$ and $u \ne 0$ such that 
\begin{subequations}\label{biharmonicE}
\begin{align}
\label{biharmonicEA}\Delta^2 u=\lambda u \qquad &\text{in } D,\\[1mm]
\label{biharmonicEB}u=\frac{\partial u}{\partial \nu} = 0 \qquad &\text{on } \partial D.
\end{align}
\end{subequations}
The associated source problem is as follows. Given a function $f$, find a function $u$ such that
\begin{subequations}\label{biharmonicS}
\begin{align}
\label{biharmonicSA}\Delta^2 u=f \qquad&\text{in } D,\\[1mm]
\label{biharmonicSB}u=\frac{\partial u}{\partial \nu} = 0 \qquad &\text{on } \partial D.
\end{align}
\end{subequations}

Define
\[
H_0^2(D) :=\left\{ v \in H^2(D): v = \frac{\partial v}{\partial \nu}=0 \text{ on } \partial D \right\}, 
\]
and a sesquilinear form $a: H_0^2(D) \times H_0^2(D) $ such that
\begin{equation}\label{auvTuTv}
a(u, v):= (\triangle u, \Delta v).
\end{equation}
The weak formulation for \eqref{biharmonicS} is, for $f \in L^2(D)$, to find $u \in H_0^2(D)$ such that
\begin{equation}\label{biharmonicSweakC}
a(u,v) =( f,v) \quad \text{for all } v \in H_0^2(D) .
\end{equation}
The weak formulation for \eqref{biharmonicE} is to find $\lambda \in \mathbb{R}$ and $u \in H_0^2(D), u\not=0$ such that
\begin{equation}\label{biharmonicEweak}
a(u,v) =\lambda(u,v) \quad  \text{for all } v \in H_0^2(D) .
\end{equation}

There exists a unique solution $u$ to \eqref{biharmonicEweak} belonging to $H^{2+\alpha}(D)$ 
for some $\alpha \in (1/2, 1]$ such that
\begin{equation}\label{BiharmonicReg}
\|u\|_{H^{2+\alpha}(D)} \le C \|f\|,
\end{equation}
where the constant $C$ depending only on $D$. When $D$ is convex, $\alpha = 1$. The parameter $\alpha$ is referred as 
the \index{index of elliptic regularity}index of elliptic regularity for the biharmonic equation.

Consequently, there exists a solution operator $T: L^2(D) \to L^2(D)$ such that, given $f \in L^2(D)$,
\begin{equation}\label{biharmonicT}
a(Tf, v) = (f, v) \quad \text{for all } v \in H_0^2(D).
\end{equation}
It is obvious that $T$ is self-adjoint due to the symmetry of $a(\cdot, \cdot)$ and compact due to the compact imbedding of $H^2_0(D)$ into $L^2(D)$.
Similar to the case of the Dirichlet eigenvalue problem, we define a holomorphic Fredholm operator function $F: \Omega \to \mathcal{L}(L^2(D), L^2(D))$ such that
\begin{equation}\label{BiFlambda}
F(\eta):=T-\frac{1}{\eta}I, \quad \eta \in \Omega.
\end{equation}

\subsection{Argyris Element Method}
We consider the Argyris element (see, e.g., Section 4.2 of \cite{SunZhou2016}), which is $H^2$-conforming for triangular meshes $\mathcal{T}_{h_n}$.
Denote the associated finite element space by $X_n$. 

The discrete problem for the source problem \eqref{biharmonicS} can be stated as follows.
For $f \in L^{2}(D)$, find $u_n \in X_n \subset H_0^2(D)$ such that
\begin{equation}\label{biharmonicSDweak}
a(u_n,v_n) =(p_n f,v_n) \quad \text{for all } v_n \in X_n.
\end{equation}
There exists a unique solution $u_n$ to \eqref{biharmonicSDweak} such that 
\begin{equation}\label{uhhH2fH2}
\|u-u_n\|_{H^2(D)} \le C h_n^{\alpha} \|f\|.
\end{equation}
The discrete formulation for the eigenvalue problem \eqref{biharmonicE} is to find $\lambda_n \in \mathbb{R}$ and $u_n \in X_n$, $u_n\not=0$ such that
\begin{equation}\label{biharmonicEDweak}
a(u_n, v_n) =\lambda_n (u_n, v_n) \quad \text{for all } v_n \in X_n.
\end{equation}

Using a duality argument (Chp. II of \cite{Braess2001}) and \eqref{uhhH2fH2}, one has that
\begin{equation}\label{biharmonicL2}
\|u-u_n\| \le C h_n^{2\alpha} \|f\|.
\end{equation}
Consequently, the discrete solution operator $T_n: X_n \to X_n$ is such that
\begin{equation}\label{BiCTn}
\|T_n p_n f - p_n T f \| \le C  h_n^{2\alpha} \|f\| \quad \text{for } f \in L^2(D).
\end{equation}
Since $X_n \subset H_0^2(D), n \in \mathbb N$ and the embedding of $H_0^2(D)$ into $X$ is compact, one has that $T_n \to T$ compactly due to Lemma~\ref{CompactConv}.

Define a discrete operator function $F_n: \Omega \to \mathcal{L}(X_n, X_n)$
\begin{equation}\label{BiFhlambda}
F_n(\eta):=T_n-\frac{1}{\eta}I_n.
\end{equation}
Then $F_n \to F$ regularly.
Using a similar argument as for the Dirichlet eigenvalue problem, we obtain the following convergence theorem of the Argyris element method for the biharmonic eigenvalue problem. 
\begin{theorem}\label{BiArgyris} 
Let $\lambda \in \sigma(F)$. There exists $n_0 \in {\mathbb N}$ and a sequence $\lambda_n \in \sigma(F_n)$,
	$n \ge n_0$, such that $\lambda_n \to \lambda$ as $n \to \infty$. For any sequence $\lambda_n \in \sigma(F_n)$
	with this convergence property and the associated eigenfunction $u_n, \|u_n\|=1$, one has that
	\begin{equation}\label{ConfErr}
		|\lambda_n - \lambda| \le C h_n^{2\alpha} \quad \text{and} \quad \|u_n-u\| \le Ch_n^{2\alpha},
	\end{equation}
	where $u$ is some eigenfunction associated to $\lambda$ with $\|u\|=1$.
\end{theorem}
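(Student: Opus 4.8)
The plan is to transcribe the proof of Theorem~\ref{CDE}, since the Argyris element is $H^2$-conforming ($X_n \subset H_0^2(D)$) and therefore places us in precisely the setting covered by the general result Theorem~\ref{FEMconvergence}. First I would fix a compact, simply connected region $\Omega \subset \mathbb{C}$ with $0 \notin \Omega$ and $\lambda \in \Omega$, $\lambda \notin \partial\Omega$. Because $T$ is compact its spectrum accumulates only at $0$, so the reciprocals $1/\eta$ for $\eta \in \Omega$ avoid $\sigma(T)$ except at the isolated eigenvalues; hence $\rho(F) \cap \Omega \ne \emptyset$, $\lambda$ is isolated, and $F^{-1}(\cdot)$ is meromorphic as recorded after \eqref{sigmaF}. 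This justifies invoking the abstract eigenvalue-approximation theory of Theorem~\ref{Thm210} through its finite-element specialization Theorem~\ref{FEMconvergence}.

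Next I would check the two hypotheses of Theorem~\ref{FEMconvergence}. Condition (i), $\|p_n x - x\| \to 0$ for the $L^2$-projection onto the Argyris space, is the standard approximation property. Condition (ii), compact convergence $T_n \to T$, has already been verified in the text: the inclusion $X_n \subset H_0^2(D)$ and the compact embedding of $H_0^2(D)$ into $L^2(D)$ give $T_n \to T$ compactly by Lemma~\ref{CompactConv}, equivalently $F_n \to F$ regularly via Theorem~\ref{SCR}. With (i) and (ii) in force, Theorem~\ref{FEMconvergence} yields the existence of $n_0$ and of $\lambda_n \in \sigma(F_n)$ with $\lambda_n \to \lambda$, together with the bounds $|\lambda_n - \lambda| \le C\epsilon_n^{1/\kappa}$ and $\inf_{x \in G(\lambda)}\|u_n - p_n x\| \le C\epsilon_n^{1/\kappa}$, where $\epsilon_n = \|T_n p_n - p_n T\|$. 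Two facts then pin down the rate: since $a(\cdot,\cdot)$ in \eqref{auvTuTv} is symmetric, $T$ is self-adjoint and every eigenvalue has ascent $\kappa = 1$ (so $G(\lambda) = \mathcal{N}(F(\lambda))$ and the fractional power disappears), and the source-problem estimate \eqref{BiCTn} gives $\epsilon_n \le C h_n^{2\alpha}$. This already delivers $|\lambda_n - \lambda| \le C h_n^{2\alpha}$.

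The one step needing genuine care is converting the abstract eigenfunction bound $\inf_{x \in \mathcal{N}(F(\lambda))}\|u_n - p_n x\| \le C h_n^{2\alpha}$, which measures distance to $p_n x$, into the intrinsic estimate $\|u_n - u\| \le C h_n^{2\alpha}$ of the statement. I would select $u \in \mathcal{N}(F(\lambda))$ with $\|u\| = 1$ realizing (or nearly realizing) the infimum and split $\|u_n - u\| \le \|u_n - p_n u\| + \|p_n u - u\|$; it then suffices that the projection error $\|p_n u - u\|$ not exceed order $h_n^{2\alpha}$. Since the biharmonic eigenfunction satisfies $u \in H^{2+\alpha}(D)$ by \eqref{BiharmonicReg} and $2+\alpha \ge 2\alpha$, the $L^2$-projection error is of at least this order, so the two terms combine to give $\|u_n - u\| \le C h_n^{2\alpha}$. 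Thus the main obstacle is not analytic depth but this bookkeeping reconciliation between the $p_n$-relative estimate produced by the operator-function theory and the classical $L^2$ error; after it is dispatched, the claimed bounds $|\lambda_n - \lambda| \le C h_n^{2\alpha}$ and $\|u_n - u\| \le C h_n^{2\alpha}$ follow, and the remainder is a verbatim repeat of the conforming Dirichlet argument.
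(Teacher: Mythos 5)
Your proposal is correct and follows essentially the same route as the paper, which simply invokes ``a similar argument as for the Dirichlet eigenvalue problem'': verify conditions (i) and (ii) of Theorem~\ref{FEMconvergence} (compact convergence via $X_n\subset H_0^2(D)$ and Lemma~\ref{CompactConv}), use $\kappa=1$ from self-adjointness, and bound $\epsilon_n\le Ch_n^{2\alpha}$ by \eqref{BiCTn}. Your final paragraph converting the $p_n$-relative eigenfunction bound into $\|u_n-u\|\le Ch_n^{2\alpha}$ via the triangle inequality and the regularity $u\in H^{2+\alpha}(D)$ is a detail the paper leaves implicit, and it is handled correctly.
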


\subsection{$C^0$ interior penalty discontinuous Galerkin method}
We consider the $C^0$ interior penalty discontinuous Galerkin method ($C^0$ IPG) for the biharmonic equation \cite{BrennerMonkSun2015} 
(see also \cite{XiJi2022JSC} using the holomorphic operator approach but a different proof).
Let $X_n \subset H^1(\Omega)$ be the Lagrange finite
 element space of order $k \ge 2$ associated with $\mathcal{T}_{h_n}$.
 Let $\mathcal{E}_{h_n}$ be the set of the edges in $\mathcal{T}_{h_n}$. For edges $e \in \mathcal{E}_{h_n}$ that are
 the common edge of two adjacent triangles $K_{\pm} \in \mathcal{T}_{h_n}$ and for
 $v\in X_n$, we define the jump of the flux to be
\[
 \llbracket \partial v/\partial n_e \rrbracket
  = \frac{\partial v_{\scriptscriptstyle K_+}}{\partial n_e} \Big{|}_e -
 \frac{\partial v_{\scriptscriptstyle K_-}}{\partial n_e} \Big{|}_e,
\]
  where $n_e$ is the unit normal pointing from $K_-$ to $K_+$.
 Let
\[
 \frac{\partial^2 v}{\partial n_e^2} = n_e \cdot (\triangle v) n_e
\]
 and define the average normal-normal derivative to be
\[
 \mv{\frac{\partial^2 v}{\partial n_e^2}} = \frac{1}{2}
  \left( \frac{\partial^2 v_{\scriptscriptstyle K_+}}{\partial n_e^2}
  +\frac{\partial^2 v_{\scriptscriptstyle K_-}}{\partial n_e^2}\right).
\]
 For $e \in \partial D$, we take $n_e$ to be the unit outward normal and define
\[
\llbracket\partial v/\partial n_e \rrbracket = -\frac{\partial v}{\partial n_e}
 \quad\text{and}
 \quad  \mv{\frac{\partial^2 v}{\partial n_e^2}}=\frac{\partial^2 v}{\partial n_e^2}.
\]

Given $f_n \in X_n$, $f_n = p_n f$, the corresponding $C^0$ IPG method for the source problem
 is to find $u_h\in X_n$ such that
\begin{equation}\label{DiscreteSourceProblem}
  a_n(u_n, v_n)=(f_n, v_n) \quad\text{for all }\, v\in X_n,
\end{equation}
where
\begin{align}
 a_n(w,v) &=   \sum_{K \in {\mathcal T}_{h_n}} \int_K D^2 w : D^2 v  \,\text{d}x \nonumber\\
 & \hspace{30pt} +
 \sum_{e \in {\mathcal{E}}_{h_n}}  \int_e \mv{
					\frac{\partial^2 w}{\partial n_e^2}}
  \left \llbracket \frac{\partial v}{\partial n_e} \right\rrbracket
 +  \mv{\frac{\partial^2 v}{\partial n_e^2} }
  \left \llbracket\frac{\partial w}{\partial n_e} \right\rrbracket \,\text{d}s \nonumber\\
 & \hspace{30pt} +   \sigma\sum_{e \in  {\mathcal{E}}_{h_n}} \frac{1}{|e|} \int_e
 \left \llbracket \frac{\partial w}{\partial n_e} \right\rrbracket
		   \left \llbracket\frac{\partial v}{\partial n_e} \right\rrbracket \,\text{d}s \label{ahCP},
\end{align}
where $D^2 w : D^2 v = \sum_{i,j =1}^2 w_{x_i x_j} v_{x_i x_j}$ is the Frobenius inner product of the Hessian
matrices of $w$ and $v$, and $\sigma>0$ is a (sufficiently large) penalty parameter.

There exists discrete solution operators $T_n: X_n \to X_n$ to \eqref{DiscreteSourceProblem} such that $T_n \to T$.

 The $C^0$ IPG method for the biharmonic eigenvalue problem is to find
 $\lambda_n \in \mathbb R$ and $u_n\not=0$ such that
\begin{equation}\label{C0IPEigProblems}
  a_n(u_n, v_n)=\lambda_n (u_n, v_n) \quad\text{for all }\, v_n\in X_n.
\end{equation}

Consequently, define
\[
F_n(\eta) = T_n - \frac{1}{\eta} I_n.
\] 
Let $X'=H_0^1(D)$. Since $T_n: X_n \to X', n \in \mathbb N$ are uniformly bounded, due to the compact embedding of $X'$ into $X$ and Lemma~\ref{CompactConv},
$T_n \to T$ compactly. Lemma~\ref{SCR} implies that $F_n \to F$ regularly.
Then the following convergence theorem holds for the $C^0$ IPG method.
\begin{theorem}\label{BiC0IPG} 
Let $\lambda \in \sigma(F)$. There exists $n_0 \in {\mathbb N}$ and a sequence $\lambda_n \in \sigma(F_n)$,
	$n \ge n_0$, such that $\lambda_n \to \lambda$ as $n \to \infty$. For any sequence $\lambda_n \in \sigma(F_n)$
	with this convergence property and the associated eigenfunction $u_n, \|u_n\|=1$, one has that
	\begin{equation}\label{ConfErr}
		|\lambda_n - \lambda| \le C h_n^{2\alpha} \quad \text{and} \quad \|u_n-u\| \le Ch_n^{2\alpha},
	\end{equation}
	where $u$ is some eigenfunction associated to $\lambda$ with $\|u\|=1$.
\end{theorem}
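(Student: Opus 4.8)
The plan is to follow the same route used for the Argyris element (Theorem~\ref{BiArgyris}) and reduce everything to an application of Theorem~\ref{FEMconvergence}. First I would fix a compact, simply connected region $\Omega \subset \mathbb{C}$ with $0 \notin \Omega$ and $\lambda$ in the interior of $\Omega$; since $T$ is compact, $\rho(F) \cap \Omega \ne \emptyset$, so the resolvent set is nonempty. The two hypotheses of Theorem~\ref{FEMconvergence} then need to be verified. Condition (i), the approximation property $\|p_n x - x\| \to 0$, holds because $X_n$ is the $C^0$ Lagrange space of order $k \ge 2$ and $p_n$ is the $L^2$-projection, for which the standard projection estimate applies. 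Condition (ii), $T_n \to T$ compactly, is exactly what was recorded immediately before the statement: since $X_n \subset X' = H_0^1(D)$ with $\{T_n\}$ uniformly bounded into $X'$, the compact embedding of $X'$ into $L^2(D)$ together with Lemma~\ref{CompactConv} gives compact convergence, whence $F_n \to F$ regularly.

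Next I would pin down the convergence order through two structural observations. Because the form $a(\cdot,\cdot)$ in \eqref{auvTuTv} is symmetric, the solution operator $T$ is self-adjoint; hence every nonzero eigenvalue of $T$ is semisimple and its ascent is $\kappa = 1$. With $\kappa = 1$ the abstract bounds in Theorem~\ref{FEMconvergence} read $|\lambda_n - \lambda| \le C\epsilon_n$ and $\inf_{x \in G(\lambda)} \|u_n - p_n x\| \le C\epsilon_n$, where the consistency error is $\epsilon_n = \|T_n p_n - p_n T\|$.

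The remaining, genuinely analytic, step is to establish $\epsilon_n \le C h_n^{2\alpha}$. For this I would invoke the $L^2$ error estimate of the $C^0$ IPG source problem \eqref{DiscreteSourceProblem}: writing $u = Tf$ and $u_n = T_n p_n f$, an Aubin--Nitsche duality argument for the biharmonic $C^0$ IPG method, combined with the elliptic regularity \eqref{BiharmonicReg} and the energy estimate in the discrete $H^2$-type norm, yields $\|u - u_n\| \le C h_n^{2\alpha}\|f\|$. Then, as in the chain \eqref{DGorder}, I would insert the $L^2$-projection via $\|T_n p_n f - p_n T f\| \le \|u_n - u\| + \|u - p_n u\|$ and bound the last term by the projection estimate, so that $\epsilon_n \le C h_n^{2\alpha}$. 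Substituting this into the $\kappa = 1$ bounds and identifying $u$ as a normalized eigenfunction associated with $\lambda$ completes the proof via Theorem~\ref{FEMconvergence}.

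I expect the duality estimate $\|u - u_n\| \le C h_n^{2\alpha}\|f\|$ for the $C^0$ IPG discretization to be the main obstacle, since the nonconformity $X_n \not\subset H_0^2(D)$ forces one to control the consistency terms on element edges (the normal--normal average and flux-jump contributions in \eqref{ahCP}) rather than relying on Galerkin orthogonality, and it is precisely here that the regularity index $\alpha$ enters and caps the rate. Everything else is a direct instance of the abstract machinery already assembled in Section~\ref{FemConv}.
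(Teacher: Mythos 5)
Your proposal is correct and follows essentially the same route as the paper: regular convergence via the compact embedding of $H_0^1(D)$ into $L^2(D)$ and Lemma~\ref{CompactConv}, then Theorem~\ref{FEMconvergence} with $\kappa=1$ and the consistency bound $\epsilon_n \le Ch_n^{2\alpha}$ coming from the $L^2$ error estimate for the $C^0$ IPG source problem (which the paper simply cites from Lemma~1 of \cite{BrennerMonkSun2015} rather than re-deriving by duality). Your write-up is in fact more explicit than the paper's proof, which records only the error estimate and leaves the verification of (i), (ii) and the invocation of Theorem~\ref{FEMconvergence} implicit.
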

\begin{proof}
Due to Lemma 1 in \cite{BrennerMonkSun2015}, there exists a unique discrete solution $u_n$ to \eqref{C0IPEigProblems}, such that
\begin{equation}
\|u-u_n \| \le Ch_n^{2\alpha} \|f\| \quad \text{for } f \in L^2(D).
\end{equation}
One has that
\begin{equation}\label{BiC0IPGTn}
\|T_n p_n f - p_n T f \| \le C  h_n^{2\alpha} \|f\| \quad \text{for } f \in L^2(D).
\end{equation}
\end{proof}

\subsection{Morley Element Method}
We consider the Morley element space $X_n$. Let $p_n$ be the $L^2$-projection from $X$ onto $X_n$.
%
For $f \in X$, the Morley finite element method for the biharmonic equation is to find $ u_n \in X_n$ such that 
\begin{equation}\label{BiharmonicMorley}
a_n(u_n,  v_n )=  (p_n f, v_n) \quad\text{for all }\,v_n\in X_n,
\end{equation}
where 
\[
a_n(u_n, v_n):=\sum_{K \in {\mathcal T}_n} \int_K  D^2 u_n : D^2 v_n dK, \quad u_n, v_n \in X_n.
\]
There exists a unique solution $u_n \in X_n$ such that (see, e.g., Section 4.4.2 of \cite{SunZhou2016})
\begin{equation}\label{ErrorMorley}
\|u-u_n\|_{2,h_n} \le Ch_n^\alpha \|f\|, 
\end{equation}
where the mesh dependent norm $\| \cdot \|_{2,h_n}$ is defined as
\[
 \|u\|_{2, h_n} = \sum_{K \in \mathcal{T}_n} (u, u)_{H^2(K)},
\]

Let $T_n: X_n \to X_n$ be the discrete solution operator to \eqref{BiharmonicMorley}. 
Since $\|u\| \le \|u\|_{2, h_n}$, due to \eqref{ErrorMorley}, it holds that
\[
\|u- u_n\| \le  Ch_n^\alpha \|f\|.
\]
As a consequence, one has that
\begin{equation}\label{BiMorleyTn}
\|T_n p_n f - p_n T f \| \le C  h_n^{\alpha} \|f\| \quad \text{for } f \in X.
\end{equation}

Letting 
\[
F_n(\eta) = T_n - \frac{1}{\eta} I_n, \quad \eta \in \Omega,
\] 
we obtain the following convergence theorem for the Morley element method.
\begin{theorem}\label{BiC0IPG} 
Let $\lambda \in \sigma(F)$. There exists $n_0 \in {\mathbb N}$ and a sequence $\lambda_n \in \sigma(F_n)$,
	$n \ge n_0$, such that $\lambda_n \to \lambda$ as $n \to \infty$. For any sequence $\lambda_n \in \sigma(F_n)$
	with this convergence property and the associated eigenfunction $u_n, \|u_n\|=1$, one has that
	\begin{equation}\label{ConfErr}
		|\lambda_n - \lambda| \le C h_n^{\alpha} \quad \text{and} \quad \|u_n-u\| \le Ch_n^{\alpha},
	\end{equation}
	where $u$ is some eigenfunction associated to $\lambda$ with $\|u\|=1$.
\end{theorem}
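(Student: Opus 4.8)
The plan is to verify hypotheses (i) and (ii) of Theorem~\ref{FEMconvergence} and then read off the error bounds from its conclusion, exactly as in the conforming, Argyris, and $C^0$ IPG cases treated above. First I would fix a compact, simply connected $\Omega \subset \mathbb{C} \setminus \{0\}$ with $\lambda \in \Omega$ and $\lambda \notin \partial \Omega$. Since $T$ is compact, $\rho(F) \cap \Omega \ne \emptyset$, so the holomorphic Fredholm operator function $F(\cdot)$ of \eqref{BiFlambda} is of the type for which Theorem~\ref{FEMconvergence} is formulated.

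For condition (i), the Morley space $X_n$ lies in $X = L^2(D)$ and its $L^2$-projection $p_n$ has the standard approximation property $\|p_n x - x\| \to 0$ for every $x \in X$, which is all that (i) demands. For condition (ii), I would invoke Lemma~\ref{U2C}: the estimate \eqref{BiMorleyTn}, $\|T_n p_n f - p_n T f\| \le C h_n^\alpha \|f\|$, states precisely that $T_n \to T$ uniformly, and together with $X_n \subset X$ and the compactness of $T$, Lemma~\ref{U2C} upgrades this to $T_n \to T$ compactly. Thus (ii) holds.

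With (i) and (ii) in hand, Theorem~\ref{FEMconvergence} supplies $n_0$ and eigenvalues $\lambda_n \in \sigma(F_n)$ converging to $\lambda$, together with the abstract bounds in terms of $\epsilon_n = \|T_n p_n - p_n T\|$ and the ascent $\kappa$. Here $\epsilon_n \le C h_n^\alpha$ by \eqref{BiMorleyTn}, and since the form $a(\cdot,\cdot)$ in \eqref{biharmonicT} is symmetric, $T$ is self-adjoint and every eigenvalue has ascent $\kappa = 1$, so $\epsilon_n^{1/\kappa} = \epsilon_n$. Substituting yields $|\lambda_n - \lambda| \le C h_n^\alpha$ and $\inf_{x \in G(\lambda)} \|u_n - p_n x\| \le C h_n^\alpha$; combining the latter with the triangle inequality and condition (i) (the projection error of the smooth eigenfunction being at least of this order) gives the stated bound $\|u_n - u\| \le C h_n^\alpha$ for a suitable normalized eigenfunction $u$.

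I do not expect a genuine obstacle, as the hard analysis resides in Theorem~\ref{FEMconvergence} and Lemma~\ref{U2C}. The only point requiring care is that the Morley element is nonconforming, $X_n \not\subset H_0^2(D)$; this is harmless because the whole argument is carried out in $L^2(D)$, where $X_n \subset X$ does hold, and the uniform convergence needed by Lemma~\ref{U2C} is secured by bounding the $L^2$-norm by the mesh-dependent norm $\|\cdot\|_{2,h_n}$ in \eqref{ErrorMorley}. This same nonconformity explains why the rate is only $h_n^\alpha$ rather than the $h_n^{2\alpha}$ of the conforming and Argyris cases: no duality improvement of the $L^2$ error is available here.
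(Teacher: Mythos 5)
Your proposal is correct and follows essentially the same route the paper intends (and spells out for the analogous DG Dirichlet and conforming cases): verify condition (i) via the $L^2$-projection, obtain condition (ii) from the uniform convergence \eqref{BiMorleyTn} upgraded to compact convergence by Lemma~\ref{U2C} (noting $X_n\subset L^2(D)$ despite the $H^2$-nonconformity), and then apply Theorem~\ref{FEMconvergence} with $\epsilon_n\le Ch_n^{\alpha}$ and ascent $\kappa=1$ from the self-adjointness of $T$. The only minor quibble is your closing claim that no duality improvement is available: the paper's own remark points to sharper $L^2$ estimates (e.g., Gallistl) that would raise the rate, so the $h_n^{\alpha}$ rate reflects the estimate used, not an intrinsic obstruction.
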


\begin{remark}
The convergence order can be improved if a sharper error estimate in $L^2$-norm is available, see, e.g., \cite{Gallistl2015IMANA}.
\end{remark}

\section{Conclusions and Future Work}
In this paper, we present a general approach to prove the convergence of various finite element methods for eigenvalue problems,
including the conforming methods, discontinuous Galerkin methods, and non-conforming methods. Using the abstract approximation theory for
eigenvalue problems of holomorphic Fredholm operator functions, one needs to verify the property of the $L^2$-projection
and the compact convergence of the discrete solution operators in $L^2$-norm. 
The result has the potential to prove the convergence of many other finite elements methods
for eigenvalue problems such as the mixed finite element methods, virtual element methods, and weak Galerkin methods. 

We use $L^2(D)$ and the $L^2$-projection for the finite element spaces. Thus the convergence of the eigenfunctions is also in $L^2$-norm.
However, this framework also works if one chooses other spaces and projections, 
for example, $H^1(D)$ and $H^1$-projection to the finite element spaces for the Dirichlet eigenvalue problem.
Then one can obtain the convergence of the eigenfunctions in $H^1$-norm.

As seen above, the convergence order obtained using Theorem~\ref{FEMconvergence} for certain method is not optimal.
However, if the optimal convergence order for the source problem is available in $L^2$-norm, the optimal convergence order for the eigenfunctions 
follows immediately. There are many interesting projects related to the proposed approach, e.g.,
1)  to prove the convergence of other methods, e.g., the virtual element methods; 2)  to prove the convergence of eigenvectors in other norms, e.g., the maximum norm;
3) to treat other eigenvalue problems, e.g., the Steklov eigenvalue problem; 4) to treat nonlinear eigenvalue problems, e.g., band structure calculation for photonic
crystals of dispersive media and the nonlinear plate vibrations \cite{XiaoEtal2020AML, Xiao2021, Pang2022}.

\bibliographystyle{amsplain}

\end{document}